\def\Xint#1{\mathchoice
    {\XXint\displaystyle\textstyle{#1}}%
    {\XXint\textstyle\scriptstyle{#1}}%
    {\XXint\scriptstyle\scriptscriptstyle{#1}}%
    {\XXint\scriptscriptstyle\scriptscriptstyle{#1}}%
    \!\int}
    \def\XXint#1#2#3{{\setbox0=\hbox{$#1{#2#3}{\int}$}
    \vcenter{\hbox{$#2#3$}}\kern-.5\wd0}}
    \def\fint{\Xint-}
\newcommand{\vast}{\bBigg@{4}}
\newcommand{\Vast}{\bBigg@{5}}
\newcommand{\D}{\mathcal{D}}
\newcommand{\E}{\mathcal{E}}
\newcommand{\F}{\mathcal{F}}
\newcommand{\texton}{\text{ on }}
\theoremstyle{plain}
\newtheorem{thm}{Theorem}[section]
\newtheorem{lem}[thm]{Lemma}
\newtheorem{cor}[thm]{Corollary}
\newtheorem{prop}[thm]{Proposition}
\newtheorem{example}[thm]{Example}
\theoremstyle{definition}
\newtheorem{defn}[thm]{Definition}
\theoremstyle{remark}
\newtheorem{remark}[thm]{Remark}
\newcommand{\bremark}{\begin{remark} \em}
\newcommand{\eremark}{\end{remark} }
\begin{document}


\title{Dirichlet  fractional Gaussian fields on  the Sierpinski gasket  and their discrete graph approximations}

\author{Fabrice Baudoin\footnote{Partly supported by the NSF grant DMS~1901315.}, Li Chen\footnote{Partly supported by a Simons Foundation Travel Support for Mathematicians Grant \#853249.}}

\maketitle

\begin{abstract}
We define and study the Dirichlet fractional Gaussian fields on the  Sierpinski gasket and show that they are limits of fractional discrete Gaussian fields defined on the sequence of canonical approximating graphs.
\end{abstract}

\tableofcontents

\section{Introduction}

Let $K \subset \mathbb R^2$ be the Sierpinski gasket fractal.
A first goal of the paper is to introduce and study a family of Gaussian fields on $K$ indexed by a parameter $s \ge 0$ and satisfying
\begin{align}\label{intro}
\mathbb{E}( X_s(f) X_s(g))= \int_{K} (-\Delta)^{-s} f (-\Delta)^{-s} g d\mu,
\end{align}
where $f,g$ belong to a space of suitable test functions on $K$, $\mu$ is the Hausdorff measure and $\Delta$ is the Dirichlet Laplacian on $K$. Such a field is heuristically defined as the distribution $X_s=(-\Delta)^{-s}W$ where $W$ is a white noise on $L^2(K,\mu)$. We will mostly be interested in the regularity properties of those fields and in the convergence of their natural discretizations. Concerning the regularity properties, the value $s=\frac{d_h}{2d_w}$ is a critical value, where $d_h=\frac{\ln 3}{\ln 2}$ is the Hausdorff dimension of $K$ and  $d_w=\frac{\ln 5}{\ln 2}$  is called the walk dimension. More precisely, the study of $X_s$ is divided according to two ranges:

\begin{itemize}
\item $0 \le s \le \frac{d_h}{2d_w}$: For this range of parameters we show that the Gaussian field $X_s$ can not be defined pointwise but belongs to a Sobolev space of distributions that we identify;
\item $ s > \frac{d_h}{2d_w}$: For this range,  we show that the Gaussian field $X_s$ can be defined pointwise and admits a H\"older regular version.
\end{itemize}

The critical value $s=\frac{d_h}{2d_w}$ corresponds to a log-correlated field on $K$ that will tentatively be further studied in a later work. 

A second goal of the paper is to introduce discrete analogues of the fractional Gaussian fields $X_s$ by using the canonical graph approximation $G_m$, $m \ge 0$ of the Sierpinski gasket, see Figure \ref{figure1}.

 \begin{figure}[htb]\label{figure1}
  	\noindent
  \makebox[\textwidth]{\includegraphics[height=0.3\textwidth] {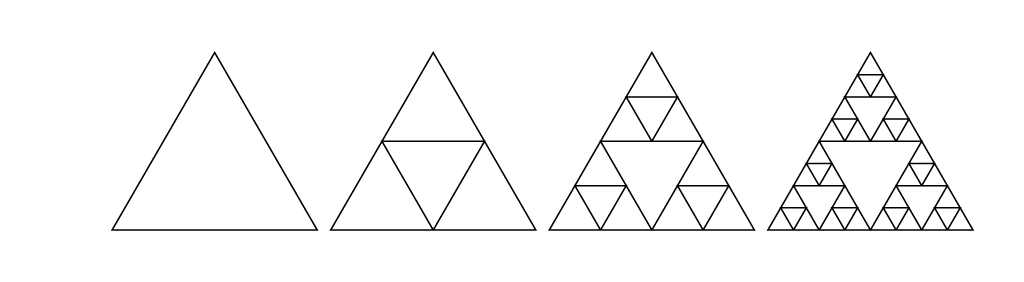}}
  	\caption{Sierpinski gasket graphs $G_0$, $G_1$, $G_2$ and $G_3$}
  \end{figure}
  
  
  Those discrete fields are  given by $X_s^m=(-\Delta_m)^{-s} W_m$ where $\Delta_m$ is the Dirichlet graph Laplacian on $G_m$, and $W_m$ is a sequence of i.i.d. standard Gaussian normal on the vertices of $G_m$. We will show the convergence of $X_s^m$ to $X_s$, first in law in a space of tempered distributions, and then in law in a suitable  Sobolev space.

\

This paper is natural complement to the recent paper \cite{BaudoinLacaux} which studied fractional Gaussian fields associated to the Neumann Laplacian on the Sierpinski gasket for the range of parameters  $  \frac{d_h}{2d_w} < s  <1- \frac{d_h}{2d_w}$. However, as noted above, in the present paper we are rather interested in the fractional Gaussian fields associated with the Dirichlet Laplacian, study the whole range of parameters $s \ge 0$ and also introduce the family of discrete fields $X_s^m$ for which we prove convergence when $m \to +\infty$. In a subsequent work, we plan to study the maxima  of the discrete log-correlated field on the gasket and their  possible rescaling limits; we refer for instance to  \cite{biskup} for an introduction and motivation to such questions.

\

The paper is organized as follows. In Section 2, after some preliminaries, we introduce the discrete and continuous fractional Gaussian fields on the gasket. A highlight result is Theorem \ref{existence FGF} which states the existence of a Gaussian random variable $X_s$ which takes values in a suitable space of tempered distributions and that satisfies \eqref{intro}. We then prove in Proposition \ref{density field} that this random tempered distribution $X_s$ defines an $L^2$ function on $K$ if and only if  $ s > \frac{d_h}{2d_w}$. Section 3 deals with the regularity theory of the random tempered distribution $X_s$. For $ s \le \frac{d_h}{2d_w}$ we quantify this regularity by introducing a scale of distributional Sobolev spaces and for $s >\frac{d_h}{2d_w}$, using the entropy method as in \cite{AdlerTaylor}, we study the H\"older regularity property of the $L^2$ function on $K$ defined by $X_s$. Finally, in Section 4, we prove the convergence of the discrete fields $X^m_s$ to $X_s$.

\

\textbf{Acknowledgment:} \textit{The authors thank an anonymous referee for the careful reading of an earlier version of the manuscript which led to  improvements in the presentation and arguments.}

\section{Discrete and continuous FGFs on the Sierpinski gasket}

\subsection{Discrete and continuous Dirichlet Laplacians}

We first define the (Dirichlet) Laplacians on the Sierpinski gasket and Sierpinski gasket graphs. For further references and more general fractals, see for instance \cite{Barlow, FukushimaShima, Kigami}.

Let $V_0=\{p_1, p_2, p_3\}$ be a set of vertices of an equilateral triangle of side 1 in $\mathbb C$. Define 
$$\mathfrak f_i(z)=\frac{z-p_i}{2}+p_i$$ for $i=1,2,3$. Then the Sierpinski gasket $K$ is the unique non-empty compact subset in $\mathbb C$ such that 
\[
K=\bigcup_{i=1}^3 \mathfrak f_i(K).
\]
The set $V_0$ is called the boundary of $K$, we will also denote it by $\partial K$.

The Hausdorff dimension of $K$ with respect to the Euclidean metric (denoted $d(x,y)=| x - y |$ in this paper) is given by $d_h=\frac{\ln 3}{\ln 2}$. A (normalized) Hausdorff measure on $K$ is given by the Borel measure $\mu$ on $K$ such that for any $i_1, \cdots, i_n \in \{ 1,2,3 \} $,
\[
\mu \left(  \mathfrak f_{i_1} \circ \cdots \circ \mathfrak f_{i_n}  (K)\right)=3^{-n}.
\]

%
%

This measure $\mu$ is $d_h$-Ahlfors regular, i.e. there exist constants $c,C>0$ such that for every $x \in K$ and $r \in [0, \mathrm{diam} (K) ]$,
\begin{equation}
\label{Ahlfors}
c r^{d_h} \le \mu (B(x,r)) \le C r^{d_h}.
\end{equation}
%
%
We define a sequence of sets $\{V_m\}_{m\ge 0}$ inductively by
\[
V_{m+1}=\bigcup_{i=1}^3 \mathfrak f_i(V_m).
\]
Then we have a natural sequence of Sierpinski gasket graphs (or pre-gaskets) $\{G_m\}_{m\ge 0}$ whose edges have length $2^{-m}$ and whose set of vertices is $V_m$, see Figure \ref{figure1}. Notice that $\#V_m=\frac{3(3^m+1)}2$. We will use the notations $V_*=\cup_{m\ge 0}V_m$ and $V_*^0=\cup_{m\ge 0}V_m\setminus V_0$.

For any $p\in V_m$, denote by $V_{m,p}$ the collection of neighbors of $p$ in $G_m$. Then $\#V_{m,p}=4$ if $p\notin V_0$ and $\#V_{m,p}=2$ if $p\in V_0$. Let $\ell(V_m)$ be the set of functions $f: V_m\to \mathbb R$. Then for any $f\in \ell(V_m)$, we consider the discrete Laplacian on $V_m$ defined by
\[
\Delta_m f(p)=5^m \sum_{q\in V_{m,p}} (f(q)-f(p)), \quad p \in V_m \setminus V_0.
\]
The semigroup generated by the discrete Laplacian $\Delta_m$ on $V_m$ is denoted by $\{P_t^m\}_{t\ge0}$.

Let $C(K)$ be the set of continuous functions on $K$. We define
\[
\mathcal D=\{f\in C(K), \text{ there exists }g\in C(K) \text{ such that }\lim_{m\to \infty}\max_{p\in V_m\setminus V_0} |\Delta_m f(p)-g(p)|=0\}.
\]
For $f\in \mathcal D$, the Kigami Laplacian $\Delta$ of $f$ on $K$ is then defined by 
\begin{equation}\label{eq:Laplacian}
\Delta f(p)=g(p),
\end{equation}
where $g$ is in $ C(K)$ and satisfies $\lim_{m\to \infty}\max_{p\in V_m\setminus V_0} |\Delta_m f(p)-g(p)|=0$.

The notations $C_0(K)$ and $\mathcal D_0$ denote respectively  the sets of  functions in $C(K)$ and $\D$ which vanish on $\partial K$ (See for instance \cite[Example 3.7.3]{Kigami} and \cite[Section 2]{FukushimaShima}). We will also consider the discrete measures on $\{V_m\}_{m\ge 0}$:
\begin{equation}\label{eq:measure}
\mu_m:=\frac2{3^{m+1}}\sum_{p\in V_m} \delta_p.
\end{equation}
For later use, we denote by $a_m$ the number $\frac{3^{m+1}}2$ and thus $\mu_m=\frac1{a_m}\sum_{p\in V_m} \delta_p$. 
%
%
%

For any function $f:V_*\to \mathbb R$, we consider the quadratic form
\[
\E_m(f,f)=\frac{5^m}{a_m}\sum_{x,y\in V_m, x\sim y} (f(x)-f(y))^2,
\]
where $x\sim y$ denotes that $x,y$ are neighbors in $G_m$.
Note that $\E_m(f,f)$ is non-decreasing in $m$. Define 
\[
\E(f,f)=\lim_{m\to +\infty} \E_m(f,f)
\]
and
\[
\F_0=\{f\in C(K):\lim_{m\to +\infty}\E_m(f,f)<\infty, f=0 \texton \partial K \}.
\]
By Theorem 4.1 and Lemma 4.1 in \cite{FukushimaShima}, $(\E,\F_0)$ is a local regular Dirichlet form on $L^2(K,\mu)$. Moreover, for any functions $f,g$ on $V_m$ vanishing on $\partial K$
\[
\E_m(f,g)=-\int_{V_m^0} \Delta_m f(x)g(x)d\mu_m(x),
\]
and for  $f \in \D_0, g\in \F_0$, 
\[
\E(f,g)=-\int_{K} \Delta f(x)g(x)d\mu(x).
\]
From \cite[Theorem 4.2]{FukushimaShima}  the Friedrichs extension of the Kigami Laplacian $\Delta$ is the self-adjoint operator on $L^2(K,\mu)$ which is the generator of $(\E,\F_0)$. We still denote this generator by $\Delta$ and the operator $\Delta$ with domain $\mathcal{D}(\Delta)$ is referred to as the Dirichlet Laplacian on $K$. 

%
%
%

The following lemma shows that any $u \in \mathcal{D}(\Delta)$ is H\"older continuous.

\begin{lem}\label{holder regu}
There exists a constant $C>0$ such that for every $u \in \mathcal{D}(\Delta)$ and $x,y \in K$, 
\[
| u(x)-u(y)| \le C d(x,y)^{d_w-d_h} \| \Delta u \|_{L^1(K,\mu)},
\]
where, as above, the parameter $d_h=\frac{\ln 3}{\ln 2}$ is the Hausdorff dimension and the parameter $d_w=\frac{\ln 5}{\ln 2}$  is the so-called walk dimension.
\end{lem}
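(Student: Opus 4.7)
My plan is to reduce the lemma to a uniform H\"older estimate on the Dirichlet Green's function $G$ of $\Delta$, and then to obtain this estimate from the standard H\"older regularity of harmonic functions on the Sierpinski gasket. Let $p_t$ denote the Dirichlet heat kernel of $\Delta$ on $K$. The Barlow--Perkins sub-Gaussian estimates give $p_t(x,z)\le C t^{-d_h/d_w}$ for $t\in(0,1]$, and the Dirichlet spectral gap on the compact set $K$ produces exponential decay of $\|p_t\|_\infty$ for $t\ge 1$. Since $d_h/d_w=\log 3/\log 5<1$, the Dirichlet Green's function $G(x,z):=\int_0^\infty p_t(x,z)\,dt$ is a continuous, uniformly bounded, symmetric kernel on $K\times K$. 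For $u\in\mathcal{D}(\Delta)$, Fubini together with $u=-\int_0^\infty P_t(\Delta u)\,dt$ (valid by the spectral gap) yields the representation $u(x)=-\int_K G(x,z)\,\Delta u(z)\,d\mu(z)$.

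Subtracting this representation at $x$ and $y$,
\[
|u(x)-u(y)|\le \|\Delta u\|_{L^1(K,\mu)}\cdot \sup_{z\in K}|G(x,z)-G(y,z)|,
\]
so the lemma follows from the uniform bound
\[
\sup_{z\in K}|G(x,z)-G(y,z)|\le C\, d(x,y)^{d_w-d_h}. \qquad(\star)
\]
By symmetry $G(x,z)=G(z,x)$, so $(\star)$ is equivalent to saying that $G(z,\cdot)$ is H\"older continuous of exponent $d_w-d_h$ uniformly in $z$. The function $G(z,\cdot)$ lies in $\F_0$, vanishes on $\p K$, is $\E$-harmonic off $\{z\}$, and satisfies the uniform $L^\infty$ bound $\sup_w G(z,w)\le G(z,z)<\infty$ (finite because $d_h/d_w<1$). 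The classical Kigami--Barlow regularity theory of harmonic functions on the gasket --- for which the oscillation on a sub-cell of level $m$ decays by a factor $(3/5)^m=2^{-m(d_w-d_h)}$, so that harmonic functions are intrinsically $(d_w-d_h)$-H\"older --- together with the uniform boundedness of $G(z,\cdot)$, yields the H\"older estimate $(\star)$.

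The delicate point, and the main obstacle, is the behaviour of $G(z,\cdot)$ near the pole $z$: although the Green's function remains continuous at the diagonal because $d_h<d_w$, the harmonic structure of $G(z,\cdot)$ degenerates at $z$, and one must control the H\"older seminorm uniformly across the shrinking sequence of cells approaching $z$. This uniform control is exactly the content of the SG harmonic analysis developed by Kigami and Barlow, and constitutes the technical heart of the proof. An alternative approach would avoid the Green's function and bound $|u(x)-u(y)|$ directly by a dyadic cell decomposition on the graphs $G_m$, together with an induction on scales; however, the reduction via the Green's function is cleaner and more naturally accommodates the $L^1$ source term.
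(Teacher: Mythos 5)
Your proposal is correct and follows essentially the same route as the paper: both represent $u$ via the Dirichlet Green's function and reduce the lemma to the uniform H\"older estimate $\sup_{z\in K}|G(x,z)-G(y,z)|\le C\,d(x,y)^{d_w-d_h}$, which the paper imports directly from Kigami's (GF4) for the reproducing kernel $g^0$ of $(\mathcal{E},\mathcal{F}_0)$. The only cosmetic differences are that you construct $G$ from the heat kernel and sketch the H\"older bound via oscillation decay of harmonic functions (flagging the behaviour near the pole, which the paper handles by citation), whereas the paper uses the reproducing-kernel identity $\mathcal{E}(g^0(\cdot,y),u)=u(y)$ to get the same representation.
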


\begin{proof}
Let $g^0$ be the reproducing kernel of the Dirichlet form $(\mathcal E, \mathcal F_0)$, see \cite[Theorem 4.1, (ii)]{FukushimaShima}. We have for every $y \in K$, and $u$ in $\mathcal F_0$,
\[
g^0( \cdot ,y) \in \mathcal F_0, \qquad \mathcal{E}( g^0( \cdot ,y), u) =u(y).
\]
For $u \in \mathcal D_0$, one obtains then
\begin{align*}
|u(x) -u(y)| & =\left|  \mathcal{E}( g^0( \cdot ,x), u) -\mathcal{E}( g^0( \cdot ,y), u) \right| \\
 &= \left| \int_K g^0( z ,x) \Delta u (z) d\mu(z) -\int_K g^0( z ,y) \Delta u (z) d\mu(z)   \right| \\
 &\le \int_K \left| g^0( z ,x) - g^0( z ,y)  \right| | \Delta u (z)| d\mu(z).
\end{align*}
Using \cite[Theorem 4.1, (GF4)]{Kigami2}, we have for every $x,y,z \in K$,
\[
 \left| g^0( z ,x) - g^0( z ,y)  \right| \le C d(x,y)^{d_w-d_h}.
\]
The result follows easily.
\end{proof}

\subsection{Discrete Fractional Gaussian Fields}

Let $0<\lambda_1^m\le \lambda_2^m\le \dots\le \lambda_{N_m}^m$ be the series of increasing eigenvalues (each being repeated according to its  multiplicity) of $-\Delta_m$ with zero boundary condition (see Section 3 in \cite{FukushimaShima}).   Let $(\Phi_i^m)_{1 \le i \le N_m}$ be the corresponding orthonormal eigenfunctions with respect to the measure $\mu_m$ defined in \eqref{eq:measure}.
%
%
The discrete Riesz kernel on $V_m$ with parameter $s \ge 0$ is defined by 
\begin{equation}\label{eq:DRiesz}
G_s^m(x,y)=\sum_{i=1}^{N_m} (\lambda_i^m)^{-s} \Phi_i^m(x)\Phi_i^m(y), \quad x,y \in V_m.
\end{equation}
From this definition it is clear that the matrix $( G_s^m(x,y))_{x,y \in V_m}$ is symmetric and non negative. It is therefore the covariance matrix of a Gaussian vector.

For $f\in \ell(V_m)$, the discrete fractional Laplacian $(-\Delta_m)^{-s}$ is defined by 
\begin{equation}\label{eq:DLaplacian}
(-\Delta_m)^{-s} f (x) =\frac1{a_m}\sum_{y\in V_m}G_s^m(x,y)f(y)
=\sum_{i=1}^{N_m} (\lambda_i^m)^{-s} \Phi_i^m(x) \frac1{a_m}\sum_{y\in V_m} \Phi_i^m(y) f(y).
\end{equation}
Note that 
$\|(-\Delta_m)^{-s}f\|_{ L^2(V_m,\mu_m)}\le (\lambda_1^m)^{-s}\|f\|_{ L^2(V_m,\mu_m)}$. 
Moreover,  one has $\inf_m \lambda_1^m>0$ from \cite[Lemma 5.2]{FukushimaShima}. Hence the operators $(-\Delta_m)^{-s}:L^2(V_m,\mu_m) \to L^2(V_m,\mu_m)$ are uniformly bounded.

\begin{defn}[DFGF]\label{def:DFGF}
Let $s \ge 0$. A discrete fractional Gaussian field $X_s^m$ with parameter $s$ on $V_m$ is a Gaussian vector indexed by $V_m$ with mean zero and covariance matrix $G_{2s}^m(x,y)$.
\end{defn}

\begin{defn}[Discrete log-correlated fields]
 We define the discrete log-correlated Gaussian field $X^m$ on $V_m$ as the discrete fractional Gaussian field $X_s^m$ with parameter $s=\frac{d_h}{2d_w}$; see Definition \ref{def:log} and Remark \ref{remklog} for further explanation about this terminology.
  \end{defn}

\begin{remark}
If $(W_i)_{1 \le i \le N_m}$ is a sequence of i.i.d Gaussian random variables with mean zero and variance one, then
\[
X_s^m(x):=\sum_{i=1}^{N_m} (\lambda_i^m)^{-s} \Phi_i^m(x)W_i , \quad x \in V_m
\]
is easily seen to be a DFGF with parameter $s$ on $V_m$.
\end{remark}

 For any $f\in \ell (V_m)$, we will use the notation
\[
X_s^m(f)=\frac1{a_m} \sum_{p\in V_m} f(p) X_s^m(p).
\]
We then note that for $f,g \in \ell (V_m)$
\begin{align}
\mathbb{E}( X_s^m(f)X_s^m(g))&=\frac1{a_m^2} \sum_{p,q \in V_m} f(p)g(q) \mathbb{E}( X_s^m(p)X_s^m(q))\nonumber\\
 & =\frac1{a_m^2} \sum_{p,q \in V_m} f(p)g(q) G_{2s}^m(p,q)\nonumber \\
 &= \frac1{a_m} \sum_{p \in V_m} f(p) \left( \frac1{a_m} \sum_{q \in V_m}g(q) G_{2s}^m(p,q)\right) \nonumber\\
 &= \frac1{a_m} \sum_{p \in V_m} f(p) (-\Delta_m )^{-2s} g (p) \label{eq:DCov}\\
 &= \frac1{a_m} \sum_{p \in V_m} (-\Delta_m )^{-s} f(p) (-\Delta_m )^{-s} g (p).\nonumber
 \end{align}

\subsection{Fractional Laplacians and fractional Riesz kernels}

The Laplacian $\Delta$ with domain $\mathcal{D}(\Delta)$ is the generator of a strongly continuous Markov semigroup $\{P_t\}_{t \ge 0}$ on $L^2(K,\mu)$. This semigroup admits a bicontinuous heat kernel $p_t(x,y)$, $t>0$, $x,y \in K$, with respect to the Hausdorff measure $\mu$. It is called the Dirichlet heat kernel on $K$. 

This heat kernel satisfies for some $c_{1},c_{2} \in(0,\infty)$,
\begin{equation}\label{eq:subGauss-upper}
p_{t}(x,y)\leq c_{1}t^{-\frac{d_{h}}{d_{w}}}\!\exp\biggl(\!-c_{2}\Bigl(\frac{d(x,y)^{d_{w}}}{t}\Bigr)^{\frac{1}{d_{w}-1}}\!\biggr)
\end{equation}
for every \ $(x,y)\in K \times K$ and $t\in\bigl(0,+\infty)$.  The exact values of $c_1,c_2$ are irrelevant in our analysis.   As above, the parameter $d_h=\frac{\ln 3}{\ln 2}$ is the Hausdorff dimension. The parameter $d_w=\frac{\ln 5}{\ln 2}$  is called the walk dimension.  The quantity $d_s=\frac{2d_h}{d_w}$ is often referred to as the spectral dimension. Since $d_w > 2$, one speaks of sub-Gaussian  heat kernel upper estimates.  

%
%

The  Dirichlet heat kernel $p_t(x,y)$ admits a uniformly convergent spectral expansion:
\begin{align}\label{spectral}
p_t(x,y)=\sum_{j=1}^{+\infty} e^{-\lambda_j t} \Phi_j(x) \Phi_j(y)
\end{align}
where $0<\lambda_1\le \lambda_2\le  \cdots \le \lambda_j \le \cdots$ are the eigenvalues of $-\Delta$ and  $(\Phi_j)_{j\ge 1} \subset  \mathcal{D} (\Delta)$ is an orthonormal basis of $L^2(K,\mu)$ such that  
\[
\Delta \Phi_j =-\lambda_j \Phi_j.
\]
Notice that $\Phi_j \in \mathcal{D}(\Delta)$ and thus is H\"older continuous. It is known from the work of Fukushima and Shima \cite{FukushimaShima} that the counting function of the eigenvalues:
\[
N(t)=\mathbf{Card} \{ \lambda_j \le t \}
\]
satisfies
\begin{equation}\label{Weyl_gasket}
N(t) \sim \Theta(t) t^{d_h/d_w}
\end{equation}
when $t \to +\infty$ where $\Theta$ is a function bounded away from 0. In particular,
\begin{align}\label{series_eigen}
\sum_{j=1}^{+\infty} \frac{1}{\lambda_j^{2s}} <+\infty
\end{align}
if and only if $s>\frac{d_h}{2d_w}$. We will consider the following space of test functions
\[
\mathcal{S}(K)= \left\{ f \in C_0(K),  \forall k \ge 0 \lim_{n \to +\infty} n^k \left| \int_K  \Phi_n(y) f(y) d\mu(y) \right| =0 \right\}.
\]

It is clear that if $f \in \mathcal{S}(K)$, then $f \in \bigcap_{k \ge 0}  \mathrm{dom}((-\Delta)^{k})$ and thus for every $k \ge 0$, $(-\Delta)^{k} f \in C(K)$ and is H\"older continuous. We also note that from \cite[Lemma 4.1(iii)]{FukushimaShima} $\mathcal{S}(K) \subset \mathcal D_0$. We consider then on $\mathcal{S}(K)$ the topology defined by the family of norms
\[
\| f \|_k = \| (-\Delta)^{k} f \|_{L^2(K,\mu)}, \quad k \ge 0.
\]
From \cite[Theorem 2]{Pietsch}, thanks to \eqref{series_eigen},  $\mathcal{S}(K)$ is a Fr\'echet nuclear space. The dual space of $\mathcal{S}(K)$  (for the latter topology) will be denoted $\mathcal{S}'(K)$.

\begin{defn}[Fractional Laplacians]\label{FLaplacian}
Let $s \ge 0$. For $f \in L^2(K,\mu)$, the fractional Laplacian $(-\Delta)^{-s}$ on $f$ is defined as
\[
(-\Delta)^{-s} f  =\sum_{j=1}^{+\infty} \frac{1}{\lambda_j^s} \Phi_j \int_K  \Phi_j(y) f(y) d\mu(y).
\]
For $f \in \mathcal{D}((-\Delta)^s):=\left\{ f \in L^2(K,\mu), \sum_{j=1}^{+\infty} \lambda_j^{2s} \left( \int_K  \Phi_j(y) f(y) d\mu(y)\right)^2 <\infty \right\}$,
the fractional Laplacian $(-\Delta)^{s}$ on $f$ is  defined as
\[
(-\Delta)^{s} f  =\sum_{j=1}^{+\infty} \lambda_j^s \Phi_j \int_K  \Phi_j(y) f(y) d\mu(y).
\]
\end{defn}

From the definition, it is clear that $(-\Delta)^{-s}: L^2(K,\mu) \to L^2(K,\mu)$ is a bounded operator. More precisely, one has $\|(-\Delta)^{-s}\|_{ L^2(K,\mu) \to L^2(K,\mu)}\le \lambda_1^{-s}$. 

\begin{defn}
	For a parameter $s \ge 0$, we define the fractional Riesz kernel $G_s $ by 
	\begin{align}\label{green function}
	G_s(x,y)=\frac{1}{\Gamma(s)} \int_0^{+\infty} t^{s-1} p_t(x,y) dt, \quad x,y \in K, \, x\neq y.
	\end{align}
	with $\Gamma$ the gamma function.
\end{defn}

\begin{remark}
We note that from  \eqref{eq:subGauss-upper} the integral \eqref{green function} is indeed convergent for all values of $s \ge 0$ provided that $ x \neq y$.
\end{remark}

We will be interested in the growth size of $G_s$. The following estimates are therefore important.

\begin{prop}\label{estimate G}
	\
	\begin{enumerate}
		\item If  $s \in [0, d_h/d_w)$, there exists a constant $C >0$ such that for every  $x,y \in K$, $x \neq y$,
		\[
		 G_s(x,y)  \le \frac{C}{d(x,y)^{d_h-sd_w}}.
		\]
		\item If $s = d_h/d_w$, there exists a constant $C >0$ such that for every  $x,y \in K$, $x \neq y$
		\[
		 G_s(x,y)  \le C | \ln d(x,y) |.
		\]
		\item If  $s > d_h/d_w$, there exists a constant $C >0$ such that for every  $x,y \in K$, 
		\[
		 G_s(x,y) \le C.
		\]
	\end{enumerate}

\end{prop}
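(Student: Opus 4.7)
The plan is to split the time integral defining $G_s(x,y)$ into three ranges, using the short-time sub-Gaussian bound \eqref{eq:subGauss-upper} on one range and the spectral gap from the Dirichlet boundary condition on another. Set $r = d(x,y)$ and note that $r \leq \operatorname{diam}(K) \leq 1$, so that $r^{d_w} \leq 1$. Write
\[
\Gamma(s)\, G_s(x,y) = \int_0^{r^{d_w}} t^{s-1} p_t(x,y)\,dt + \int_{r^{d_w}}^{1} t^{s-1} p_t(x,y)\,dt + \int_1^{\infty} t^{s-1} p_t(x,y)\,dt =: I_1 + I_2 + I_3.
\]

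For $I_1$, I would plug in the full sub-Gaussian bound and perform the change of variable $u = t/r^{d_w}$, which produces the factor $r^{s d_w - d_h}$ times an integral $\int_0^1 u^{s-1-d_h/d_w} \exp(-c_2 u^{-1/(d_w-1)})\,du$; this last integral is finite because the stretched-exponential factor kills the singularity at $u=0$. So $I_1 \le C r^{s d_w - d_h}$ in all three ranges of $s$. For $I_2$, I would discard the Gaussian factor and use only the on-diagonal bound $p_t(x,y) \le c_1 t^{-d_h/d_w}$, reducing the estimate to an elementary power-integral $\int_{r^{d_w}}^{1} t^{s-1-d_h/d_w}\,dt$. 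This yields $C r^{s d_w - d_h}$ if $s < d_h/d_w$, $C$ if $s > d_h/d_w$, and $C\,d_w |\ln r|$ if $s = d_h/d_w$, giving precisely the three regimes of the proposition.

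For $I_3$, the sub-Gaussian bound alone is not enough (it only gives polynomial decay in $t$), so I would instead exploit the Dirichlet spectral gap $\lambda_1 > 0$. From the spectral expansion \eqref{spectral} one gets
\[
p_t(x,y) = e^{-\lambda_1(t-1)} \sum_{j \ge 1} e^{-(\lambda_j - \lambda_1)(t-1)} e^{-\lambda_j}\Phi_j(x)\Phi_j(y) \le e^{-\lambda_1(t-1)} \sqrt{p_1(x,x)\,p_1(y,y)}
\]
by Cauchy--Schwarz in $j$, and $p_1$ is bounded on $K \times K$ by \eqref{eq:subGauss-upper}. Hence $I_3 \le C \int_1^{\infty} t^{s-1} e^{-\lambda_1 t}\,dt$, which is a finite constant depending only on $s$.

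Combining the three bounds then gives the stated estimates in each of the three cases. The only genuinely non-routine ingredient is the observation in the previous paragraph: the sub-Gaussian heat-kernel estimate available from the literature must be upgraded to exponential time decay via the Dirichlet spectral gap, since $p_t \sim t^{-d_h/d_w}$ is not integrable against $t^{s-1}$ at $t = \infty$ for large $s$. With that in hand, the rest is bookkeeping of exponents and, in the $s = d_h/d_w$ case, noticing that the borderline power integral produces a logarithm.
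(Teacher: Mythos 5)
Your argument is correct and is essentially the approach the paper intends: the authors omit the proof, deferring to the analogous Neumann-case computation in \cite[Proposition 2.6]{BaudoinLacaux}, which rests on the same splitting of the time integral in \eqref{green function} at $t = d(x,y)^{d_w}$, with the sub-Gaussian bound \eqref{eq:subGauss-upper} controlling short times and the large-time tail handled separately; you correctly identify and supply the one genuinely Dirichlet-specific ingredient, namely upgrading the polynomial decay of \eqref{eq:subGauss-upper} to exponential decay via the spectral gap $\lambda_1>0$ (your Cauchy--Schwarz step in the uniformly convergent expansion \eqref{spectral} is valid). The only quibble is cosmetic: in the case $s = d_h/d_w$ your three pieces sum to $C(1+|\ln d(x,y)|)$, which agrees with the stated bound $C|\ln d(x,y)|$ only for $d(x,y)$ bounded away from $\diam K = 1$; this is a defect of the proposition as written rather than of your proof.
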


\begin{proof}
The proof is similar to the proof of \cite[Proposition 2.6]{BaudoinLacaux}  (which dealt with the Neumann fractional Riesz kernels) and thus is omitted for conciseness.
\end{proof}

%
%
%
%
%
%
%

\begin{lem}
Let $s > 0$. For $f \in \mathcal{S}(K)$, and $x \in K$
\begin{equation}\label{eq:Fractional Laplacian-Riesz Kernel}
 (-\Delta)^{-s} f (x) = \int_K G_{s}(x,y) f(y) d\mu(y).
\end{equation}
\end{lem}

\begin{proof}
We first note that the integral  $\int_K G_{s}(x,y) f(y) d\mu(y)$ is indeed convergent. Since $\mathcal{S}(K) \subset C(K)$, it is enough to prove that for every $x \in K$, $\int_K G_{s}(x,y)  d\mu(y) <+\infty$ which easily follows from Proposition \ref{estimate G} because \eqref{Ahlfors} implies that for $\gamma < d_h$
\[
\int_K \frac{d\mu(y)}{d(x,y)^\gamma}<+\infty.
\]
Using then Fubini's theorem and the definition of $G_s$, one gets
\[
\int_K G_{s}(x,y) f(y) d\mu(y)=\frac{1}{\Gamma(s)} \int_0^{+\infty} t^{s-1} P_t f (x)  dt, 
\]
From \eqref{spectral}, it is seen that
\[
\frac{1}{\Gamma(s)} \int_0^{+\infty} t^{s-1} P_t f (x)  dt= (-\Delta)^{-s} f (x) 
\]
and the conclusion follows.
\end{proof}

\begin{remark}\label{rem:Riesz square}
For $s>\frac{d_h}{2d_w}$, we observe that \eqref{eq:Fractional Laplacian-Riesz Kernel} holds for any $f\in L^2(K,\mu)$ and  $\mu$ a.e. $x\in K$. Indeed, this can been seen from Proposition \ref{estimate G} since it implies that $G_s(x,\cdot)\in L^2(K,\mu)$ for every $x\in K$. We refer to \cite[Propositions 2.7 and 2.8]{BaudoinLacaux} for more details. Moreover, in this case we also have
\[
G_s(x,y)=\sum_{j=1}^{+\infty}\frac1{\lambda_j^s}\Phi_j(x)\Phi_j(y).
\]
\end{remark}

\begin{cor}
Let $s > 0$. For $f,g \in \mathcal{S}(K)$,
\[
 \int_{K} (-\Delta)^{-s} f (-\Delta)^{-s} g d\mu= \int_K \int_K f(x) g(y) G_{2s}(x,y) d\mu(x) d\mu(y).
\]

\end{cor}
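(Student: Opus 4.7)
The plan is to derive the identity as a direct corollary of the previous lemma by combining the self-adjointness of $(-\Delta)^{-s}$ with Fubini's theorem.

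First I would record that $(-\Delta)^{-s}$ is self-adjoint on $\mathcal{S}(K)$. Writing $\hat{f}(j):=\int_K \Phi_j f \, d\mu$ for the Fourier coefficients in the orthonormal basis $(\Phi_j)$, Definition \ref{FLaplacian} together with Parseval's identity gives
\[
\int_K (-\Delta)^{-s} f \cdot (-\Delta)^{-s} g \, d\mu = \sum_{j=1}^{\infty} \lambda_j^{-2s} \hat{f}(j) \hat{g}(j) = \int_K f \cdot (-\Delta)^{-2s} g \, d\mu.
\]
The rearrangement is legitimate because $\hat{f}(j)$ and $\hat{g}(j)$ decay faster than any polynomial in $j$, this being the defining property of $\mathcal{S}(K)$, while $\lambda_j^{-2s}$ is bounded.

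Next I would apply the previous lemma with $2s$ in place of $s$ (permissible since $2s > 0$), yielding
\[
(-\Delta)^{-2s} g(x) = \int_K G_{2s}(x,y) g(y) \, d\mu(y).
\]
Multiplying by $f(x)$ and integrating against $\mu$ then gives
\[
\int_K f \cdot (-\Delta)^{-2s} g \, d\mu = \int_K f(x) \int_K G_{2s}(x,y) g(y) \, d\mu(y) \, d\mu(x).
\]

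The final step is to invoke Fubini's theorem to put the integral into the symmetric form of the statement. Its hypotheses are satisfied because $f$ and $g$ are continuous on the compact set $K$, hence bounded, and by Proposition \ref{estimate G} together with the Ahlfors regularity \eqref{Ahlfors} we have $\sup_{x \in K} \int_K G_{2s}(x,y) \, d\mu(y) < \infty$ in each of the three regimes of Proposition \ref{estimate G}: the singularity $d(x,y)^{-(d_h-2sd_w)}$ is $\mu$-integrable because $d_h - 2sd_w < d_h$ for $s > 0$, while the logarithmic and bounded regimes are even milder. No genuine obstacle arises; the corollary is just a bilinear reformulation of the preceding lemma, obtained by ``moving one copy of $(-\Delta)^{-s}$ through the inner product.''
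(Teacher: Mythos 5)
Your proposal is correct and follows essentially the same route as the paper: Parseval's identity to reduce the bilinear form to $\int_K f\,(-\Delta)^{-2s}g\,d\mu$, followed by the preceding lemma applied with parameter $2s$ to express this as the double integral against $G_{2s}$. The extra care you take in justifying Fubini via Proposition \ref{estimate G} and \eqref{Ahlfors} is the same integrability argument already used in the paper's proof of that lemma.
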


\begin{proof}
From the definition of $(-\Delta)^{-s} $ and the previous lemma 
\begin{align*}
 \int_{K} (-\Delta)^{-s} f (-\Delta)^{-s} g d\mu &= \sum_{j=1}^{+\infty} \frac{1}{\lambda_j^{2s}}  \int_K  \Phi_j(y) f(y) d\mu(y)\int_K  \Phi_j(y) g(y) d\mu(y) \\
  &= \int_{K}  f (-\Delta)^{-2s} g d\mu \\
  &= \int_K \int_K f(x) g(y) G_{2s}(x,y) d\mu(x) d\mu(y).
\end{align*}
\end{proof}

\subsection{Fractional Gaussian Fields}

The next theorem states the existence of the fractional Gaussian fields.

\begin{thm}\label{existence FGF}
Let $s \ge 0$. There exists a centered Gaussian distribution $X_s$ on $\mathcal{S}'(K)$ such that for $f,g \in \mathcal{S}(K)$,
\[
\mathbb{E}( X_s(f) X_s(g))= \int_{K} (-\Delta)^{-s} f (-\Delta)^{-s} g d\mu.
\]

\end{thm}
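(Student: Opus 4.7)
The plan is to invoke the Bochner--Minlos theorem on the nuclear Fr\'echet space $\mathcal{S}(K)$. Consider the symmetric bilinear form
\[
Q(f,g) := \int_{K} (-\Delta)^{-s} f \, (-\Delta)^{-s} g \, d\mu
\]
on $\mathcal{S}(K) \times \mathcal{S}(K)$, together with the functional $C(f) := \exp\bigl(-\tfrac{1}{2} Q(f,f)\bigr)$. My goal is to verify that $C$ is a valid Minlos characteristic functional (continuous at $0$, positive definite, and normalized by $C(0)=1$); the resulting probability measure $\nu$ on $\mathcal{S}'(K)$ then supplies $X_s$ as a random element of $\mathcal{S}'(K)$ with law $\nu$, with $X_s(f) := \langle X_s, f\rangle$.

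First I would rewrite $Q$ spectrally. Using Definition~\ref{FLaplacian} and Plancherel in the orthonormal basis $(\Phi_j)_{j\ge 1}$ of $L^2(K,\mu)$,
\[
Q(f,f) = \sum_{j=1}^{\infty} \frac{1}{\lambda_j^{2s}} \Bigl| \int_K \Phi_j(y) f(y) \, d\mu(y) \Bigr|^2,
\]
which makes both symmetry and non-negativity of $Q$ transparent. Since $\lambda_j \ge \lambda_1 > 0$ and $s \ge 0$, the same identity yields the continuity estimate
\[
Q(f,f) \le \lambda_1^{-2s} \|f\|_{L^2(K,\mu)}^2 = \lambda_1^{-2s} \|f\|_0^2,
\]
where $\|\cdot\|_0$ is one of the defining seminorms of the Fr\'echet topology on $\mathcal{S}(K)$. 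Continuity of $C$ at $0$ follows at once. Positive definiteness of $C$ is the classical fact that $h \mapsto \exp(-\tfrac{1}{2}\|h\|^2)$ is positive definite on any pre-Hilbert space, applied here to $h = (-\Delta)^{-s} f \in L^2(K,\mu)$.

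With these three properties in hand, the Bochner--Minlos theorem for nuclear Fr\'echet spaces — whose applicability rests on the nuclearity of $\mathcal{S}(K)$ already noted just above Definition~\ref{FLaplacian} via \eqref{series_eigen} and \cite{Pietsch} — produces a unique Borel probability measure $\nu$ on $\mathcal{S}'(K)$ with Fourier transform $\int_{\mathcal{S}'(K)} e^{i\langle T,f\rangle}\, d\nu(T) = C(f)$. Taking $X_s$ distributed according to $\nu$ and setting $X_s(f) := \langle X_s, f\rangle$, for each fixed $f$ the characteristic function of $X_s(f)$ is $t \mapsto \exp(-t^2 Q(f,f)/2)$, so $X_s(f)$ is a centered real Gaussian with variance $Q(f,f)$. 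Applying this to $f+g$ and polarizing recovers the covariance identity of the theorem.

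I do not anticipate a genuine obstacle; the only non-routine ingredient is the invocation of Bochner--Minlos, and the data it requires — nuclearity of $\mathcal{S}(K)$ plus the one-line bound $Q(f,f) \le \lambda_1^{-2s}\|f\|_0^2$ — are either already in the paper or immediate from the spectral representation. The finer question of which subspace of $\mathcal{S}'(K)$ actually supports $X_s$ (an $L^2$-function regime versus a genuinely distributional regime, separated by $s=d_h/(2d_w)$) is the business of the subsequent sections and is not needed for the bare existence statement asserted here.
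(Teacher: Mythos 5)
Your proposal is correct and follows essentially the same route as the paper: both verify continuity at $0$ via the bound $Q(f,f)\le C\|f\|_{L^2(K,\mu)}^2$ and positive definiteness of the Gaussian characteristic functional, then invoke Bochner--Minlos on the nuclear space $\mathcal{S}(K)$. The only cosmetic differences are your explicit spectral constant $\lambda_1^{-2s}$ and your appeal to the classical positive-definiteness fact where the paper cites a reference.
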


\begin{proof}
The space $\mathcal{S}(K)$ is a nuclear space. By the Bochner-Minlos theorem in nuclear spaces, it is enough to prove that the functional
\[
\varphi: f \to \exp \left(-\frac{1}{2}\int_K | (-\Delta)^{-s} f |^2 d\mu\right)
\]
which is defined on $\mathcal{S}(K)$ is continuous  at 0 and positive definite.  Since the quadratic form $\int_K | (-\Delta)^{-s} f |^2 d\mu$ is positive definite on $\mathcal{S}(K)$, it follows from Proposition 2.4 in \cite{LSSW} that $\varphi$ is indeed definite positive. From Definition \ref{FLaplacian}, it is easy to see that there exists a constant $C>0$ such that for every $f \in \mathcal{S}(K)$,
 \[
 \int_K | (-\Delta)^{-s} f |^2 d\mu \le C \int_K f^2 d\mu.
 \]
 Since the convergence in $\mathcal{S}(K)$ implies the convergence in $L^2$, we conclude that $\varphi$ is indeed continuous at $0$.
%
\end{proof}

\begin{defn}[FGF]\label{def:FGF}
Let $s \ge 0$. A  fractional Gaussian field $X_s$ with parameter $s$ on $K$ is a centered Gaussian field $\left\{ X_s(f), f  \in \mathcal{S}(K)  \right\}$ such that 
for any $f,g \in \mathcal{S}(K)$,
\[
\mathbb{E}( X_s(f) X_s(g))= \int_{K} (-\Delta)^{-s} f (-\Delta)^{-s} g d\mu.
\]
\end{defn}


\begin{defn}[Log-correlated field]\label{def:log}
 We define a  log-correlated Gaussian field on $K$ as a fractional Gaussian field $X_s$ in Definition \ref{def:FGF} with the parameter $s=\frac{d_h}{2d_w}$.
  \end{defn}
  
\begin{remark}\label{remklog}
We use the terminology  log-correlated  field because of the estimate proved in Proposition \ref{estimate G} 
on the correlation function $G_{2s}$ for $s =\frac{d_h}{2d_w}$.
\end{remark}
  
%

In the following of this section, our aim is to establish that the FGF has an $L^2$ density if and only if $s>\frac{d_h}{2d_w}$. We begin with some reminders on Gaussian measures.

Let $\mathcal K$ be the Borel $\sigma$-field on $K$. Given a probability space $(\Omega, \mathcal F, \mathbb P)$, we consider a real-valued centered Gaussian random measure $W: \mathcal K \to L^2(\Omega, \mathcal F, \mathbb P)$ with density $\mu$ on $K$. Such measure is often referred to as white noise. In other words, $W$ is such that 
\begin{itemize}
    \item $W$ is a measure on $(K,\mathcal K)$ almost surely;
    \item For any $A\in \mathcal K$ of finite measure, $W(A)$ is a real-valued Gaussian variable with mean zero and variance $\mathbb E(W(A)^2)=\mu(A)$;
    \item For any sequence of pairwise disjoint measurable sets $(A_n)_{n\in \mathbb N}\in \mathcal \mathcal K^{\mathbb N}$, the random variables $W(A_n)$, $n\in \mathbb N$, are independent. 
\end{itemize}
Hence for any $f\in L^2(K,\mathcal K, \mu)$, the stochastic integral $W(f)=\int_K fdW$ is a well-defined centered Gaussian random variable. Moreover, the Gaussian measure $W$ gives rise to an isonormal Gaussian family $\{W(f),f\in L^2(K,\mu)\}$ with the covariance function 
\[
\mathbb E(W(f)W(g))=\int_K fgd\mu.
\]

Recall that the Riesz kernel $G_s(x,y)$ is square integrable for $s>\frac{d_h}{2d_w}$, see Remark \ref{rem:Riesz square}. We then introduce the following definition. 
\begin{defn}\label{def:density field}
Let $s>\frac{d_h}{2d_w}$. The fractional Brownian field with parameter $s$ is defined as 
\[
\tilde{X}_s(x)=\int_K G_s(x,y) W(dy), \quad x\in K.
\]
\end{defn}
From Remark \ref{rem:Riesz square}, one can equivalently define \[
\tilde{X}_s(x)=\sum_{i=1}^{+\infty} \lambda_i^{-s} \Phi_i(x)W_i,
\]
where $(W_i:=W(\Phi_i))_{i\ge 1}$ is an i.i.d. sequence of  Gaussian random variables with mean zero and variance one.

\begin{prop}\label{density field}
Let $s >\frac{d_h}{2d_w}$. Then the Gaussian random field defined by
	\[
	X_s(f)=\int_K f(x) \tilde{X}_s(x) d\mu (x), \qquad f \in \mathcal{S}(K),
	\]
has the law of a FGF with parameter $s$. Moreover, if there exists a Gaussian field $(\tilde{Y}_s(x))_{x\in K}$ on $K$ with 
\[
\mathbb{E}\left( \int_K \tilde{Y}_s(x)^2 d\mu(x)\right)<+\infty
\]
such that the Gaussian random field defined by
	\[
	Y_s(f)=\int_K f(x) \tilde{Y}_s(x) d\mu (x), \qquad f \in \mathcal{S}(K),
	\]
has the law of a FGF with parameter $s$, then  $s>\frac{d_h}{2d_w}$.
\end{prop}
\begin{proof}
Let us assume that $s>\frac{d_h}{2d_w}$. From Fubini's theorem, for every $f \in \mathcal{S}(K)$, one has a.s.
\[
\int_K f(x) \tilde{X}_s(x) d\mu (x)= \int_K f(x)\int_K G_s(x,y)W(dy)d\mu(x)=\int_K(-\Delta)^{-s}f(y)W(dy).
\]
Thus, $\int_K f(x) \tilde{X}_s(x) d\mu (x)$ is a Gaussian random variable with mean zero and variance
\[
\int_K |(-\Delta )^{-s} f (x)|^2 d\mu(x).
\]

On the other hand, assume that there exists a Gaussian field $(\tilde{Y}_s(x))_{x\in K}$ on $K$ with 
\[
\mathbb{E}\left( \int_K \tilde{Y}_s(x)^2 d\mu(x)\right)<+\infty
\] 
and such that the Gaussian random field defined by
	\[
	Y_s(f)=\int_K f(x) \tilde{Y}_s(x) d\mu (x), \qquad f \in \mathcal{S}(K),
	\]
has the law of a FGF with parameter $s$. Using spectral decomposition, we have
\[
\tilde{Y}_s(x)= \sum_{i=1}^{+\infty} Y_s(\Phi_i) \, \Phi_i (x).
\]

Notice that the sequence  $(Y_s(\Phi_i))_{i \ge 1}$ is a sequence of independent Gaussian random variables with mean zero and variance $(\lambda_i^{-2s})_{i\ge 1}$. Indeed, we recall that $(\Phi_i)_{i\ge 1}$ is an orthonormal basis in $L^2(K,\mu)$ and  $\Phi_i\in \mathcal S$. Then by Definition \ref{def:FGF}, 
\[
\mathbb E(Y_s(\Phi_i)Y_s(\Phi_j))=(\lambda_i\lambda_j)^{-s}\int_K \Phi_i \Phi_j d\mu=(\lambda_i\lambda_j)^{-s} \delta_{ij},
\]
where $\delta_{ij}$ is the Kronecker delta.
Since $\mathbb{E}\left( \int_K \tilde{Y}_s(x)^2 d\mu(x)\right)<+\infty$, we must have
\[
\sum_{j=1}^{+\infty} \frac{1}{\lambda_j^{2s}} <+\infty
\]
and therefore $s>\frac{d_h}{2d_w}$.
\end{proof}

\section{Regularity properties of the FGFs}

\subsection{Sobolev spaces}\label{def:Sobolev}

For any $ \alpha \ge 0$, we define the Sobolev  space $H^\alpha (K)$ as the closure of $\mathcal S (K)$ with respect to the norm 
\[
\|f\|^2_{H^\alpha(K)}:=\sum_{j=1}^{\infty} \lambda_j^\alpha \left(\int_K f\Phi_jd\mu\right)^2,
\]
and the corresponding inner product is 
\[
(f,g)_{H^\alpha(K)}=\sum_{j=1}^{\infty} \lambda_j^\alpha \int_K f\Phi_jd\mu \int_K g\Phi_jd\mu,
\]
where we recall that $(\lambda_j)_{j\ge 1}$ are the non-decreasing eigenvalues of the Laplacian $\Delta$ on $K$ and $(\Phi_j)_{j\ge 1}$ are the corresponding orthonormal eigenfunctions.
Denote by $H^{-\alpha}(K) \subset \mathcal{S}'(K)$ the  dual space of $H^\alpha (K)$ in the distributional sense. Then we have the following lemma.
\begin{lem}\label{lem:Sobolev dual}
The canonical norm on $H^{-\alpha}(K)$  induced by $H^{\alpha}(K)$ is given by 
\[
\|\psi\|^2_{H^{-\alpha}(K)}:=\sum_{j=1}^{\infty} \lambda_j^{-\alpha}  \psi (\Phi_j)^2, \quad \forall \psi \in H^{-\alpha}(K).
\]
\end{lem}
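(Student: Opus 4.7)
The plan is to prove this by standard Hilbert-space duality through the spectral basis $(\Phi_j)_{j \ge 1}$ of $-\Delta$. Morally, the map $f \mapsto (\hat f_j)_{j \ge 1}$ with $\hat f_j := \int_K f \Phi_j d\mu$ is a unitary isomorphism between $H^\alpha(K)$ and the weighted sequence space $\{(c_j): \sum_j \lambda_j^\alpha c_j^2 < \infty\}$, whose Hilbert dual under the natural pairing $(\hat f_j, c_j) \mapsto \sum_j \hat f_j c_j$ is $\{(c_j): \sum_j \lambda_j^{-\alpha} c_j^2 < \infty\}$. The claimed formula is just the readout of this duality under the identification $\psi \leftrightarrow (\psi(\Phi_j))_j$.

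In detail, I would first check that every $\Phi_j$ lies in $\mathcal{S}(K)$, which holds trivially because $\int_K \Phi_n \Phi_j d\mu = \delta_{nj}$ satisfies any polynomial decay condition; consequently $\psi(\Phi_j)$ is a well-defined real number for each $\psi \in \mathcal{S}'(K)$, and $\|\Phi_j\|_\alpha^2 = \lambda_j^\alpha$. Next, for any $f \in \mathcal{S}(K)$ the rapid decay of $(\hat f_j)$ combined with the Weyl asymptotics \eqref{Weyl_gasket} implies that the partial sums $\sum_{j=1}^N \hat f_j \Phi_j$ are Cauchy in $\|\cdot\|_\alpha$ and converge to $f$ in $H^\alpha(K)$. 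By continuity of $\psi$ on $H^\alpha(K)$ this yields $\psi(f) = \sum_{j=1}^\infty \hat f_j \, \psi(\Phi_j)$.

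The two-sided bound on the dual norm then follows from a weighted Cauchy--Schwarz. For the upper estimate, grouping the weights as $\lambda_j^{\alpha/2} \hat f_j \cdot \lambda_j^{-\alpha/2} \psi(\Phi_j)$ gives
\[
|\psi(f)|^2 \le \left(\sum_j \lambda_j^\alpha \hat f_j^2\right)\left(\sum_j \lambda_j^{-\alpha} \psi(\Phi_j)^2\right) = \|f\|_\alpha^2 \sum_j \lambda_j^{-\alpha} \psi(\Phi_j)^2,
\]
which gives $\|\psi\|_{-\alpha}^{\mathrm{dual}} \le (\sum_j \lambda_j^{-\alpha}\psi(\Phi_j)^2)^{1/2}$ once we know the right-hand series converges; running the same inequality on partial sums first delivers this convergence, with uniform bound $\|\psi\|_{-\alpha}^{\mathrm{dual}\,2}$. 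For the matching lower bound I plan to test $\psi$ against the saturating sequence $f_N := \sum_{j=1}^N \lambda_j^{-\alpha} \psi(\Phi_j) \Phi_j$, which lies in $\mathcal{S}(K)$ as a finite linear combination of eigenfunctions; a direct computation then yields $\psi(f_N) = \|f_N\|_\alpha^2 = \sum_{j=1}^N \lambda_j^{-\alpha} \psi(\Phi_j)^2$, so that $|\psi(f_N)|/\|f_N\|_\alpha$ realizes the $N$-th partial-sum square root, and letting $N \to \infty$ closes the proof.

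No step strikes me as a genuine obstacle; the argument is essentially a template. The only minor subtlety is ensuring that $\sum_j \lambda_j^{-\alpha} \psi(\Phi_j)^2$ makes sense a priori, which is exactly what the partial-sum Cauchy--Schwarz bound provides, together with the fact that $\mathcal{S}(K)$ is dense in $H^\alpha(K)$ by definition, so that the restriction of $\psi \in H^{-\alpha}(K)$ to the eigenfunctions already determines its dual norm.
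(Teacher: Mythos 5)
Your argument is correct and amounts to the same standard Hilbert-space duality computation as the paper's: the paper simply invokes the Riesz representation theorem (writing $\psi(g)=(f_\psi,g)_\alpha$, computing $\psi(\Phi_j)=\lambda_j^\alpha\int_K f_\psi\Phi_j\,d\mu$, and using the isometry $\|\psi\|_{-\alpha}=\|f_\psi\|_\alpha$), whereas you build the representer $\sum_j\lambda_j^{-\alpha}\psi(\Phi_j)\Phi_j$ by hand via Cauchy--Schwarz and the saturating sequence $f_N$. Both routes yield the identity, and your extra care about the a priori convergence of $\sum_j\lambda_j^{-\alpha}\psi(\Phi_j)^2$ and the membership $\Phi_j\in\mathcal{S}(K)$ is sound.
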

\begin{proof}
The proof is standard, we write it down for the sake of completeness. For every $\psi\in H^{-\alpha}(K)$ there exists $f_{\psi}\in H^\alpha(K)$ such that  $\psi (g)=(f_{\psi}, g)_{H^\alpha(K)}$ for all $g\in \mathcal S (K)$. In particular, the above  inner product gives that for every $\Phi_j, j\ge 1$, 
\[
\psi (\Phi_j)=(f_{\psi}, \Phi_j)_{H^\alpha(K)}=\lambda_j^{\alpha}\int_K f_{\psi} \Phi_jd\mu.
\]
Note also that by isometry one has $\|\psi\|_{H^{-\alpha}(K)}=\|f_{\psi}\|_{H^{\alpha}(K)}$. Hence
\[
\|\psi\|_{H^{-\alpha}(K)}^2=\|f_{\psi}\|_{H^{\alpha}(K)}^2=\sum_{j=1}^{\infty} \lambda_j^\alpha \left(\int_K f_{\psi}\Phi_j d\mu \right)^2
=\sum_{j=1}^{\infty} \lambda_j^{-\alpha}  \psi (\Phi_j)^2.
\]
\end{proof}

\subsection{Sobolev regularity property of the continuous FGFs in the range  \texorpdfstring{$0 \le s \le \frac{d_h}{2d_w}$}{s}}

\begin{prop}
Let $0 \le s \le \frac{d_h}{2d_w}$. Then the FGF $X_s$ a.s. belongs to $H^{-\alpha}(K)$ for every $\alpha > \frac{d_h}{d_w}-2s$. More precisely, for every $\alpha > \frac{d_h}{d_w}-2s$, the series
\[
\sum_{j=1}^{\infty} \lambda_j^{-\alpha}  X_s (\Phi_j)^2
\]
is a.s. convergent.
\end{prop}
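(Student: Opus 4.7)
The strategy is to compute $\|X_s\|_{-\alpha}^2$ via the explicit dual-norm formula of Lemma \ref{lem:Sobolev dual} and to show its expectation is finite under the stated condition on $\alpha$, which forces a.s.\ finiteness.

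First I would check that each eigenfunction $\Phi_j$ lies in $\mathcal{S}(K)$, so that $X_s(\Phi_j)$ is a well-defined centered Gaussian random variable. This is immediate: $\Phi_j \in \mathcal{D}(\Delta)\subset C_0(K)$, and by orthonormality $\int_K \Phi_n \Phi_j \, d\mu = \delta_{nj}$, so the required decay in the definition of $\mathcal{S}(K)$ holds trivially (all but one term vanishes). Next I would compute the variance: using $(-\Delta)^{-s}\Phi_j = \lambda_j^{-s}\Phi_j$ and the covariance formula from Theorem \ref{existence FGF},
\[
\mathbb{E}\bigl( X_s(\Phi_j)^2\bigr) = \int_K |(-\Delta)^{-s}\Phi_j|^2 \, d\mu = \lambda_j^{-2s}.
\]

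Combining with Lemma \ref{lem:Sobolev dual} and Tonelli's theorem, for any $\alpha\ge 0$,
\[
\mathbb{E}\bigl(\|X_s\|_{-\alpha}^2\bigr)=\sum_{j=1}^{\infty}\lambda_j^{-\alpha}\,\mathbb{E}\bigl(X_s(\Phi_j)^2\bigr)=\sum_{j=1}^{\infty}\lambda_j^{-(\alpha+2s)}.
\]
The Weyl asymptotics \eqref{Weyl_gasket} give $\lambda_j \asymp j^{d_w/d_h}$ (since $\Theta$ is bounded away from $0$ and bounded above), so the series $\sum_j \lambda_j^{-\beta}$ converges if and only if $\beta > d_h/d_w$. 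Taking $\beta = \alpha+2s$, the series is finite precisely when $\alpha > d_h/d_w - 2s$, which is the assumed range.

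Finiteness of the expectation forces a.s.\ finiteness of $\sum_j \lambda_j^{-\alpha}X_s(\Phi_j)^2$, hence $X_s \in H^{-\alpha}(K)$ almost surely. The main (minor) subtlety is the preliminary check that $X_s$ can be evaluated on the eigenfunctions $\Phi_j$ — i.e.\ that $\Phi_j\in\mathcal{S}(K)$ — but as noted above this follows directly from orthonormality, so no real obstacle arises and the proof reduces to the Weyl-count bookkeeping.
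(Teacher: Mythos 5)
Your proof is correct and follows essentially the same route as the paper: compute the variance $\mathbb{E}(X_s(\Phi_j)^2)=\lambda_j^{-2s}$, reduce to the convergence of $\sum_j \lambda_j^{-\alpha-2s}$, and invoke the Weyl asymptotics \eqref{Weyl_gasket}. Your version is merely more detailed (checking $\Phi_j\in\mathcal{S}(K)$ and passing through $\mathbb{E}(\|X_s\|_{-\alpha}^2)<\infty$, which sidesteps any need for the independence the paper mentions), but the underlying argument is the same.
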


\begin{proof}
The random variables $X_s (\Phi_j)$ are independent Gaussian random variables with mean zero and variance $\lambda_j^{-2s}$. Since the series
\[
\sum_{j=1}^{\infty} \lambda_j^{-\alpha-2s}
\]
converges for $\alpha +2s >\frac{d_h}{d_w}$, the result follows.
\end{proof}

\subsection{H\"older regularity property of the continuous FGFs in the range  \texorpdfstring{$s>\frac{d_h}{2d_w}$}{s}}

%
%
%

In this section our goal is to study the regularity of the density field $(\tilde{X}_s(x))_{x\in K}$ that appeared in  Definition \ref{def:density field}.  The following first result is almost immediate.

\begin{prop}
Let $s > \frac{d_h}{2d_w}$. The Gaussian field $(\tilde{X}_s(x))_{x\in K}$ on $K$ defined in Definition \ref{def:density field} is such that a.s.  $\tilde{X}_s \in H^{\alpha}(K)$ for every $\alpha < 2s -\frac{d_h}{d_w}$.
\end{prop}

\begin{proof}
As remarked before, one has
\[
\tilde{X}_s(x)=\sum_{i=1}^{+\infty} \lambda_i^{-s} \Phi_i(x)W_i,
\]
where $W_i$ is an i.i.d. sequence of  Gaussian random variables with mean zero and variance one. Therefore, 
\[
\|\tilde{X}_s \|^2_{H^\alpha(K)}=\sum_{j=1}^{\infty} \lambda_j^{\alpha -2s} W_j^2,
\]
which is a.s. finite if $\alpha < 2s -\frac{d_h}{d_w}$.
\end{proof}

Next, we are interested in the H\"older regularity properties  of $(\tilde{X}_s(x))_{x\in K}$. This requires a deeper analysis and our main analytical ingredients are the following H\"older regularization estimates for the operators $(-\Delta)^{-s}$.

\begin{thm}\label{regularity fractional Laplacian}

\

\begin{itemize}
\item Let $\frac{d_h}{2d_w} < s < 1-\frac{d_h}{2d_w} $. There exists a constant $C>0$ such that for every $f \in L^2(K,\mu)$ and $x,y \in K$,
\[
| (-\Delta)^{-s}f (x) -(-\Delta)^{-s}f (y)| \le C d(x,y)^{sd_w-\frac{d_h}{2}} \| f \|_{L^2(K,\mu)}.
\]
\item Let $ s = 1-\frac{d_h}{2d_w} $. There exists a constant $C>0$ such that for every $f \in L^2(K,\mu)$ and $x,y \in K$ with $d(x,y) \le 1/2$,
\[
| (-\Delta)^{-s}f (x) -(-\Delta)^{-s}f (y)| \le C d(x,y)^{d_w-d_h} | \ln d(x,y)|  \| f \|_{L^2(K,\mu)}.
\]
\item Let $ s > 1-\frac{d_h}{2d_w} $. There exists a constant $C>0$ such that for every $f \in L^2(K,\mu)$ and $x,y \in K$,
\[
| (-\Delta)^{-s}f (x) -(-\Delta)^{-s}f (y)| \le  C d(x,y)^{d_w-d_h}  \| f \|_{L^2(K,\mu)}.
\]
\end{itemize}

\end{thm}

The proof of the Theorem is based on the following lemmas. In the first lemma below, we use a slight modification of an argument due to Barlow in \cite[Theorem 3.40]{Barlow}, but we include the proof for the sake of completeness. For $\lambda  \ge 0$, let $U_\lambda=(\lambda-\Delta)^{-1}$ be the resolvent operator and let $u_\lambda (x,y)$, $x,y \in K$ be its kernel.

\begin{lem}
There exists a constant $C>0$ such that for every $x,y,z\in K$ and $\lambda >0$
 \[
 | u_\lambda (x,z)-u_\lambda (y,z) | \le C d(x,y)^{d_w-d_h}
 \]
 and
 \[
 \int_K | u_\lambda (x,z)-u_\lambda (y,z) | d\mu(z) \le C \lambda^{-\frac{d_h}{d_w}} d(x,y)^{d_w-d_h}.
 \]
\end{lem}

\begin{proof}
We slightly adapt the proof of Theorem 3.40 in \cite{Barlow}.  Let $\left( (X_t)_{t \ge 0}, (\mathbb P_x)_{x \in K}\right)$ be the Brownian motion on $K$, see \cite{Barlow}.  Note that the Dirichlet Laplacian $\Delta$ is  (twice) the generator of the process $(X_t)_{t \ge 0}$ killed at the boundary $V_0$.  From (3.39) in \cite{Barlow} we have
 \[
 u_\lambda(x,y)=q_\lambda (x,y) u_\lambda (y,y),
 \]
 with
 \[
 q_\lambda (x,y)=\mathbb{P}_x (T_y \le T_{V_0} \wedge R_\lambda )
 \]
 where $T_y$ is the hitting time of $y$, $T_{V_0}$ is the hitting time of $V_0$ and $R_\lambda$ is an independent exponential random variable with parameter $\lambda >0$. As in the proof  of Theorem 3.40 in \cite{Barlow} we have then
 \[
 | u_\lambda (x,z)-u_\lambda (y,z) |\le C ( q_\lambda (z,x) +q_\lambda (z,y))d(x,y)^{d_w-d_h}.
 \]
 The first estimate 
 \[
 | u_\lambda (x,z)-u_\lambda (y,z) | \le C d(x,y)^{d_w-d_h}
 \]
 follows easily since $q_\lambda (z,x)\le 1$.
 Using the on-diagonal lower bound for the transition densities of the Brownian motion $\left( (X_t)_{t \ge 0}, (\mathbb P_x)_{x \in K}\right)$, see \cite{Barlow,BarlowPerkins}, we easily obtain 
 \[
\int_K q_\lambda (z,x)d\mu(z) \le C \lambda^{-\frac{d_h}{d_w}},
 \]
 which yields the second estimate.
\end{proof}

\begin{lem}
There exists a constant $C>0$ such that for every $f \in L^2 (K,\mu)$, and $t>0$,
\[
| P_t f (x)-P_tf(y) | \le C e^{-\frac{\lambda_1}{2} t} \frac{d(x,y)^{d_w-d_h}}{t^{1-\frac{d_h}{2d_w}}} \| f \|_{L^2 (K,\mu)}.
\]
\end{lem}
\begin{proof}
We have from the previous lemma
 \[
 | u_\lambda (x,z)-u_\lambda (y,z) | \le C d(x,y)^{d_w-d_h}
 \]
 and
  \[
 \int_K | u_\lambda (x,z)-u_\lambda (y,z) | d\mu(z) \le C \lambda^{-\frac{d_h}{d_w}} d(x,y)^{d_w-d_h}.
 \]
 This yields
 \[
 \int_K | u_\lambda (x,z)-u_\lambda (y,z) |^2 d\mu(z) \le C \lambda^{-\frac{d_h}{d_w}} d(x,y)^{2(d_w-d_h)},
 \]
 from which we deduce
 \[
 | U_\lambda f (x) -U_\lambda f (y)| \le C \lambda^{-\frac{d_h}{2d_w}} d(x,y)^{d_w-d_h} \| f \|_{L^2 (K,\mu)}.
 \]
 This gives that for every $t>0$ and $\lambda >0$
 \begin{align}\label{ref_iu}
 | P_t f (x) -P_t f (y)|\le C \lambda^{-\frac{d_h}{2d_w}} d(x,y)^{d_w-d_h} \| (\Delta - \lambda)P_t f \|_{L^2 (K,\mu)}.
 \end{align}
 From spectral theory, one has
 \[
 \| (\Delta - \lambda)P_t f \|_{L^2 (K,\mu)} \le C\left( \frac{1}{t} + \lambda \right) e^{-\frac{\lambda_1}2 t} \| f \|_{L^2 (K,\mu)}.
 \]
 From \eqref{ref_iu}, we deduce then
 \[
| P_t f (x)-P_tf(y) | \le C e^{-\frac{\lambda_1}2 t} \frac{d(x,y)^{d_w-d_h}}{t^{1-\frac{d_h}{2d_w}}} \| f \|_{L^2 (K,\mu)}.
\]
by choosing $\lambda=\frac{1}{t}$.
\end{proof}

\begin{proof}[Proof of Theorem \ref{regularity fractional Laplacian}]

We first consider the case $\frac{d_h}{2d_w} < s < 1-\frac{d_h}{2d_w} $. Note that
\[
| (-\Delta)^{-s}f (x) -(-\Delta)^{-s}f (y)| \le \frac{1}{\Gamma(s)} \int_0^{+\infty} t^{s-1} | P_t f(x)-P_t f(y)| dt.
\]
We then split the integral into two parts:
\[
\int_0^{+\infty} t^{s-1} | P_t f(x)-P_t f(y)| dt=\int_0^{\delta} t^{s-1} | P_t f(x)-P_t f(y)| dt +\int_\delta^{+\infty} t^{s-1} | P_t f(x)-P_t f(y)| dt,
\]
where $\delta>0$ is a parameter to be chosen later. We first have
\begin{align*}
\int_0^{\delta} t^{s-1} | P_t f(x)-P_t f(y)| dt&\le \int_0^{\delta} t^{s-1}( | P_t f(x)|+|P_t f(y)|) dt \\
 & \le \int_0^{\delta} t^{s-1}  \frac{C}{t^{\frac{d_h}{2d_w}}}  dt \, \| f \|_{L^2 (K,\mu)} \\
  & \le C \delta^{s -\frac{d_h}{2d_w}} \| f \|_{L^2 (K,\mu)}.
\end{align*}
For the second integral, we have
\begin{align*}
\int_\delta^{+\infty} t^{s-1} | P_t f(x)-P_t f(y)| dt &\le C  \int_\delta^{+\infty} t^{s-1} e^{-\frac{\lambda_1}{2} t} \frac{d(x,y)^{d_w-d_h}}{t^{1-\frac{d_h}{2d_w}}}  dt \, \| f \|_{L^2 (K,\mu)} \\
 & \le C \delta^{s -1+\frac{d_h}{2d_w}} d(x,y)^{d_w-d_h} \| f \|_{L^2 (K,\mu)}.
\end{align*}
Choosing $\delta=d(x,y)^{d_w}$ finishes the proof that
\[
| (-\Delta)^{-s}f (x) -(-\Delta)^{-s}f (y)| \le C d(x,y)^{sd_w-\frac{d_h}{2}} \| f \|_{L^2(K,\mu)}.
\]

Next consider the case $s=1-\frac{d_h}{2d_w} $. The above method can still be used, but we now estimate the second integral as follows for $\delta \le \frac{1}{2^{d_w}}$
\[
\int_{\delta}^{+\infty} t^{-1} e^{-\frac{\lambda_1}{2} t} dt \le C | \ln \delta |.
\]
This yields
\[
| (-\Delta)^{-s}f (x) -(-\Delta)^{-s}f (y)| \le C d(x,y)^{d_w-d_h} | \ln d(x,y)|  \| f \|_{L^2(K,\mu)}.
\]

Finally, for the case $s>1-\frac{d_h}{2d_w}$, we just argue as follows
\begin{align*}
| (-\Delta)^{-s}f (x) -(-\Delta)^{-s}f (y)| & \le \frac{1}{\Gamma(s)} \int_0^{+\infty} t^{s-1} | P_t f(x)-P_t f(y)| dt \\
 & \le C  \int_0^{+\infty} t^{s-1} e^{-\frac{\lambda_1}{2} t} \frac{d(x,y)^{d_w-d_h}}{t^{1-\frac{d_h}{2d_w}}} dt \,  \| f \|_{L^2 (K,\mu)} \\
 &  \le C d(x,y)^{d_w-d_h}  \| f \|_{L^2(K,\mu)}.
\end{align*}
\end{proof}

As a consequence of Theorem \ref{regularity fractional Laplacian}, we obtain the H\"older regularization properties of the Riesz kernels.
\begin{cor}\label{regularity Riesz kernel}
\

\begin{itemize}
\item Let $\frac{d_h}{2d_w} < s < 1-\frac{d_h}{2d_w} $. There exists a constant $C>0$ such that for every $x,y \in K$,
\[
\int_K(G_s(x,z)-G_s(y,z) )^2d\mu(z) \le C d(x,y)^{2sd_w-d_h}.
\]
\item Let $ s = 1-\frac{d_h}{2d_w} $. There exists a constant $C>0$ such that for every $f \in L^2(K,\mu)$ and $x,y \in K$ with $d(x,y) \le 1/2$,
\[
\int_K(G_s(x,z)-G_s(y,z) )^2d\mu(z)  \le C d(x,y)^{2(d_w-d_h)} | \ln d(x,y)|^2.
\]
\item Let $ s > 1-\frac{d_h}{2d_w} $. There exists a constant $C>0$ such that for every $x,y \in K$,
\[
\int_K(G_s(x,z)-G_s(y,z) )^2d\mu(z)  \le  C d(x,y)^{2(d_w-d_h)}.
\]
\end{itemize}
\end{cor}
\begin{proof}
Recall that for any $f\in L^2(K,\mu)$, 
\[
\int_K (G_s(x,z)-G_s(y,z) ) f(z) d\mu(z)=(-\Delta)^{-s}f(x)-(-\Delta)^{-s}f(y).
\]
By $L^2$ duality, we conclude the results from Theorem \ref{regularity fractional Laplacian}.
\end{proof}

We are now ready for the main results of this section:

\begin{thm}
Let $s > \frac{d_h}{2d_w}$ and denote $H_s=\min (sd_w -d_h/2, d_w-d_h)$ and 
\begin{align*}
\omega_s(x,y) =
\begin{cases}
 d(x,y)^{H_s} \sqrt{\left|\ln d(x,y)\right|} , \, s \neq 1-\frac{d_h}{2d_w} \\
 d(x,y)^{d_w-d_h} \left|\ln d(x,y)\right|^{3/2}, \, s = 1-\frac{d_h}{2d_w}.
\end{cases}
\end{align*} 
There exists a continuous Gaussian field $(\tilde{X}_s(x))_{x \in K}$ such that  a.s.
	$$
	\lim_{\delta \to 0}\underset{\underset{x,y\in K}{\scriptsize 0< d(x,y)}\le \delta}{\sup}\  \frac{\left| \tilde{X}_s(x)-\tilde{X}_s(y)\right|}{\omega_s(x,y)} <+\infty	$$ 
	and such that the Gaussian random  field defined by
	\[
	X_s(f)=\int_K f(x) \tilde{X}_s(x) d\mu (x), \qquad f \in \mathcal{S}(K),
	\]
has the law of a FGF with parameter $s$.	
\end{thm}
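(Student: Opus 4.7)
The approach is to exhibit $\tilde{X}_s$ by its spectral series, identify its canonical (Gaussian) metric with the operator $L^2$-norm of the kernel difference $G_s(x,\cdot) - G_s(y,\cdot)$, and then conclude via Dudley--Fernique chaining as in \cite{AdlerTaylor}.

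First I start from the series
\[
\tilde{X}_s(x) = \sum_{i=1}^{\infty} \lambda_i^{-s} \Phi_i(x) W_i
\]
of Proposition \ref{density field}. For $s > \frac{d_h}{2d_w}$, Proposition \ref{estimate G} (together with continuity of $G_{2s}$ away from the diagonal) yields $G_{2s}(x,x) < \infty$ pointwise, so the series converges in $L^2(\Omega)$ for every $x \in K$ (not only for $\mu$-a.e. $x$). Using Parseval applied to $z \mapsto G_s(x,z) - G_s(y,z)$ in the orthonormal basis $(\Phi_i)$, together with the identity $(-\Delta)^{-s} f(x) = \int_K G_s(x,z) f(z) d\mu(z)$ (extended from $\mathcal{S}(K)$ to $L^2(K,\mu)$ by density and Theorem \ref{regularity riesz kernel}), I obtain the dual representation
\[
\sigma(x,y)^2 := \mathbb{E}\bigl[(\tilde{X}_s(x) - \tilde{X}_s(y))^2\bigr] = \|G_s(x,\cdot) - G_s(y,\cdot)\|_{L^2(K,\mu)}^2 = \sup_{\|f\|_{L^2} \le 1}\bigl|(-\Delta)^{-s} f (x) - (-\Delta)^{-s} f (y)\bigr|^2.
\]
Inserting Theorem \ref{regularity riesz kernel} produces the canonical-metric bound $\sigma(x,y) \le C \rho(d(x,y))$, where $\rho(r) = r^{H_s}$ for $s \ne 1 - \frac{d_h}{2d_w}$ and $\rho(r) = r^{d_w-d_h}|\ln r|$ in the borderline case.

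Next I run the entropy/chaining method. By the Ahlfors regularity \eqref{Ahlfors} the $\varepsilon$-covering number of $(K,d)$ satisfies $N(\varepsilon) \le C \varepsilon^{-d_h}$, so $\log N(\varepsilon) \lesssim |\log \varepsilon|$ and the Dudley entropy integral converges. The Gaussian chaining inequality (for instance Theorems~1.3.3 and 1.4.1 of \cite{AdlerTaylor}) then produces a continuous modification of $\tilde{X}_s$ that satisfies
\[
\limsup_{\delta \to 0}\sup_{0 < d(x,y) \le \delta} \frac{|\tilde{X}_s(x) - \tilde{X}_s(y)|}{\rho(d(x,y))\sqrt{|\ln d(x,y)|}} < \infty \quad \text{a.s.}
\]
The extra $\sqrt{|\ln d(x,y)|}$ is the usual Gaussian chaining price coming from the $\sqrt{\log N(\varepsilon)}$ factor; multiplied by $\rho$ it reproduces exactly $\omega_s$ in both regimes. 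Identification of $f \mapsto \int_K f \tilde{X}_s d\mu$ with the FGF law of Theorem \ref{existence FGF} is then the same Fubini/variance computation as in the proof of Proposition \ref{density field}.

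The main delicate point is the borderline case $s = 1 - \frac{d_h}{2d_w}$: here $\rho$ itself already carries a logarithmic factor, and one must verify that the chaining argument only contributes a further $\sqrt{|\ln|}$ factor, yielding the exponent $3/2$ in $\omega_s$ rather than something larger. This is achieved by tracking the geometric scales $\varepsilon_k = 2^{-k}$ in the chaining sum with the refined $\rho$ and using the polynomial bound on $N(\varepsilon)$ to dominate the series. Away from the borderline, $\rho$ is purely H\"older and the standard Adler--Taylor statement applies directly.
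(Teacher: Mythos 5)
Your proposal is correct and follows essentially the same route as the paper: both bound the canonical metric $\sigma(x,y)$ of $\tilde{X}_s$ by $\rho(d(x,y))$ using Theorem \ref{regularity riesz kernel} (you merely make the $L^2$-duality step $\sigma(x,y)=\sup_{\|f\|_{L^2}\le 1}|(-\Delta)^{-s}f(x)-(-\Delta)^{-s}f(y)|$ explicit, which the paper leaves implicit), and both then apply the Adler--Taylor entropy bound with the polynomial covering numbers coming from Ahlfors regularity, picking up the extra $\sqrt{|\ln\delta|}$ from the entropy integral and treating the borderline case $s=1-\frac{d_h}{2d_w}$ separately via the modulus $F(t)=t^{d_w-d_h}|\ln t|$. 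No substantive difference or gap to report.
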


\begin{proof}
Let $\tilde{X}_s(x)=\int_KG_s(x,y)W(dy)$, see Definition \ref{def:density field}. It follows from Proposition \ref{density field} that $\{X_s(f), f\in \mathcal S(K)\}$ has the law of FGF with parameter $s$.

Next we will use the entropy method as in \cite{AdlerTaylor}, see also \cite[Theorem 3.8]{BaudoinLacaux} to construct an appropriate continuous modification of $\tilde{X}_s$ that we will still denote by $\tilde{X}_s$ . Assume first  $s\ne 1-\frac{d_h}{2d_w}$. We observe that Corollary \ref{regularity Riesz kernel} gives
\[
\mathbb E \big((\tilde{X}_s(x)-\tilde{X}_s(y))^2\big) \le C d(x,y)^{2H_s}, \quad \forall x,y \in K.
\]
Consider the pseudo distance $\rho_s(x,y)$ defined by 
\[
\rho_s(x,y)=\sqrt{\mathbb E \big((\tilde{X}_s(x)-\tilde{X}_s(y))^2\big)}.
\] 
Then $\rho_s(x,y)\le Cd(x,y)^{H_s}$. Denote by $\mathcal N_{\rho_s}(\varepsilon)$ the smallest number of $\rho_s$-balls with radius $r\le \varepsilon$ that cover $K$. We set the log-entropy for $K$ by 
\[
\mathcal H_{\rho_s}(\varepsilon)=\ln (\mathcal N_{\rho_s}(\varepsilon)).
\]
According to \cite[Theorem 1.3.5]{AdlerTaylor}, there exist a random variable $\eta$ and a universal constant $D$ such that for all $\tau<\eta$,
\[
\sup_{\substack{\rho_s(x,y)\le \tau \\ x,y \in K}} |\tilde{X}_s(x)-\tilde{X}_s(y)| 
\le D \int_0^{\tau} \sqrt{\mathcal  H_{\rho_s}(\varepsilon)} d\varepsilon.
\]
Notice that $\mathcal N_{\rho_s}(\varepsilon)=O(\varepsilon^{-d_h/H_s})$. Then up to the change of $\eta$ and $D$, one has for all $\delta<\eta$,
\[
\sup_{\substack{d(x,y)\le \delta \\ x,y \in K}} |\tilde{X}_s(x)-\tilde{X}_s(y)| 
\le D \int_0^{C\delta^{H_s}} \sqrt{-\ln \varepsilon}\, d\varepsilon.
\]
Finally,  up the the change of constant $D$, for all $\delta<\eta$ small enough, we obtain from integration by parts that 
\begin{align*}
\sup_{\substack{d(x,y)\le \delta \\ x,y \in K}} |\tilde{X}_s(x)-\tilde{X}_s(y)| 
&\le D \Big(\delta^{H_s} \sqrt{-\ln \delta}+\int_0^{C\delta^{H_s}} \frac1{\sqrt{-\ln \varepsilon}}\, d\varepsilon\Big)
\le 2D \delta^{H_s} \sqrt{-\ln \delta}.
\end{align*}
Thus the proof is concluded.

Consider now the critical case  $s= 1-\frac{d_h}{2d_w}$. By Corollary \ref{regularity Riesz kernel}, we have  
\[
\rho_s(x,y)\le d(x,y)^{d_w-d_h}|\ln d(x,y)|=:F(d(x,y)).
\]
Observe that $F(t)$ is increasing on the interval $(0,t_0)$ for some small $t_0$. We denote by $F^{-1}$ the inverse function on the domain $(0, F(t_0))$.
Then for any $0<\varepsilon<F(t_0)$, one has $\mathcal N_{\rho_s}(\varepsilon)=O((F^{-1}(\varepsilon))^{-d_h})$. Using the same argument as above,  there exist a random variable $\eta$ and constants $C,D>0$ such that  for all $\delta<\min\{\eta, t_0\}$,
\[
\sup_{\substack{d(x,y)\le \delta \\ x,y \in K}} |\tilde{X}_s(x)-\tilde{X}_s(y)| 
\le D \int_0^{CF(\delta)} \sqrt{-\ln F^{-1}(\varepsilon)}\, d\varepsilon.
\]
Hence,  up to the change of constant $D$ and for all $\delta<\min\{\eta, t_0\}$ small enough, we have
\begin{align*}
\sup_{\substack{d(x,y)\le \delta \\ x,y \in K}} |\tilde{X}_s(x)-\tilde{X}_s(y)| 
&\le D \Big(F(\delta) \sqrt{-\ln \delta}-\int_0^{CF(\delta)} \varepsilon\big(\sqrt{-\ln F^{-1}(\varepsilon)}\big)'\, d\varepsilon\Big)
\le D F(\delta) \sqrt{-\ln \delta}.
\end{align*}
The second inequality follows from elementary computations below where we take $\gamma=d_w-d_h$ and let $\varepsilon$ be small enough:
\begin{align*}
\varepsilon\big(\sqrt{-\ln F^{-1}(\varepsilon)}\big)'
&=\frac{1}{2\sqrt{-\ln F^{-1}(\varepsilon)}}\frac{-\varepsilon}{F^{-1}(\varepsilon) F'(F^{-1}(\varepsilon))}
\\ &=
\frac{1}{2\sqrt{-\ln F^{-1}(\varepsilon)}}\frac{-\varepsilon}{(F^{-1}(\varepsilon))^{\gamma}(-\gamma \ln F^{-1}(\varepsilon)-1)}
\\ &=
\frac{1}{2\sqrt{-\ln F^{-1}(\varepsilon)}}\frac{-\varepsilon}{(\gamma F( F^{-1}(\varepsilon))-(F^{-1}(\varepsilon))^{\gamma})}
=O\left( \frac{-1}{\sqrt{-\ln F^{-1}(\varepsilon)}}\right).
\end{align*}
Thus we conclude that for $s=1-\frac{d_h}{2d_w}$
\[
\lim_{\delta \to 0}\underset{\underset{x,y\in K}{\scriptsize 0< d(x,y)}\le \delta}{\sup}\  \frac{\left| \tilde{X}_s(x)-\tilde{X}_s(y)\right|}{d(x,y)^{d_w-d_h} (\left|\ln d(x,y)\right|)^{3/2}} <+\infty.
\]
\end{proof}

For $s >1 $ the above result can substantially be improved.

\begin{prop}
Let $s >1 $. There exists a continuous Gaussian field $(\tilde{X}_s(x))_{x \in K}$ such that 
	$$
\mathbb{E} \left( \left(\sup_{x,y \in K, x\neq y} 	  \frac{\left| \tilde{X}_s(x)-\tilde{X}_s(y)\right|}{d(x,y)^{d_w-d_h} } \right)^2 \right) <+\infty	$$ 
	and such that the Gaussian random random field defined by
	\[
	X_s(f)=\int_K f(x) \tilde{X}_s(x) d\mu (x), \qquad f \in \mathcal{S}(K),
	\]
has the law of a FGF with parameter $s$.
\end{prop}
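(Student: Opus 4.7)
The plan is to bootstrap from an $L^2$ density field at a lower parameter $s_0$ and then to invoke the $L^2 \to C^{d_w-d_h}$ mapping property recorded in the third case of Theorem \ref{regularity riesz kernel}.

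First, I would pick $s_0$ satisfying $\frac{d_h}{2d_w} < s_0 < s - 1 + \frac{d_h}{2d_w}$; such an $s_0$ exists precisely because $s > 1$. By Proposition \ref{density field} applied with parameter $s_0$, there is a density field $\tilde X_{s_0}$ realized as $\tilde X_{s_0} = \sum_i \lambda_i^{-s_0} W_i \Phi_i$ with $(W_i)$ an i.i.d.\ standard Gaussian sequence, and
\[
\mathbb E \| \tilde X_{s_0} \|_{L^2(K,\mu)}^2 = \sum_i \lambda_i^{-2s_0} < +\infty.
\]

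Next, I would set $\tilde X_s := (-\Delta)^{-(s-s_0)} \tilde X_{s_0}$, applied pathwise on the full-measure event where $\tilde X_{s_0} \in L^2(K,\mu)$. Since $s - s_0 > 1 - \frac{d_h}{2d_w}$, the third case of Theorem \ref{regularity riesz kernel} applies deterministically on that event to yield, for all $x, y \in K$,
\[
| \tilde X_s(x) - \tilde X_s(y) | \le C \, d(x,y)^{d_w - d_h} \, \| \tilde X_{s_0} \|_{L^2(K,\mu)},
\]
so $\tilde X_s$ is continuous (in fact H\"older). Dividing by $d(x,y)^{d_w-d_h}$, taking the supremum over $x \ne y$, squaring and then expectation yields
\[
\mathbb E \left[ \left( \sup_{x \ne y,\, x,y \in K} \frac{|\tilde X_s(x) - \tilde X_s(y)|}{d(x,y)^{d_w - d_h}} \right)^2 \right] \le C^2 \, \mathbb E \| \tilde X_{s_0} \|_{L^2(K,\mu)}^2 < +\infty,
\]
which is the required estimate.

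Finally, I would identify the law. By Parseval, $\int_K \tilde X_{s_0} \Phi_j \, d\mu = \lambda_j^{-s_0} W_j$, so the spectral form of the operator gives $\tilde X_s = \sum_j \lambda_j^{-s} W_j \Phi_j$, and for $f \in \mathcal S(K)$ the random variable $X_s(f) := \int_K f \tilde X_s \, d\mu = \sum_j \lambda_j^{-s} W_j \int_K f \Phi_j \, d\mu$ is centered Gaussian with variance $\sum_j \lambda_j^{-2s} \bigl( \int_K f \Phi_j \, d\mu \bigr)^2 = \int_K | (-\Delta)^{-s} f |^2 \, d\mu$, matching the FGF covariance. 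The only mildly delicate point will be the pathwise invocation of Theorem \ref{regularity riesz kernel} at a random $L^2$ input; this is legitimate because the theorem is a deterministic inequality valid for every $L^2$ function, so applying it on the a.s.\ event $\{\tilde X_{s_0} \in L^2(K,\mu)\}$ is allowed, with the only randomness carried by $\| \tilde X_{s_0} \|_{L^2(K,\mu)}$ on the right-hand side.
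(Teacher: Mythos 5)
Your proposal is correct and is essentially the paper's own argument up to a change of variables: the paper fixes $\alpha$ with $1-\tfrac{d_h}{2d_w}<\alpha<s-\tfrac{d_h}{2d_w}$, notes that $(-\Delta)^{\alpha}\tilde X_s$ is a.s. in $L^2(K,\mu)$ with square-integrable norm, and applies the third case of Theorem \ref{regularity riesz kernel}, which is exactly your construction with $s_0=s-\alpha$ and $\tilde X_{s_0}=(-\Delta)^{\alpha}\tilde X_s$. The law identification via the spectral expansion also matches the paper's treatment.
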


\begin{proof}
Let $s >1$. As above, let  $(\tilde{X}_s(x))_{x \in K}$ be  a continuous Gaussian field on $K$ such that the Gaussian random random field defined by
	\[
	X_s(f)=\int_K f(x) \tilde{X}_s(x) d\mu (x), \qquad f \in \mathcal{S}(K)
	\]
has the law of a FGF with parameter $s$. 
We have then
 \[
\tilde{X}_s(x)=\sum_{i=1}^{+\infty} \lambda_i^{-s} \Phi_i(x)W_i,
\]
where the $W_i$'s form an i.i.d. sequence of Gaussian random variables with mean zero and variance one. Let $\alpha >1-\frac{d_h}{2d_w}$ such that $s-\alpha> \frac{d_h}{2d_w}$. Since
\[
\mathbb{E}\left( \| (-\Delta)^{\alpha} \tilde{X}_s  \|^2_{L^2(K,\mu)}\right)=\sum_{i=1}^{+\infty} \lambda_i^{2(\alpha-s)} <+\infty,
\]
we deduce that $\tilde{X}_s$ almost surely belongs to the $L^2$ domain of $(-\Delta)^{\alpha} $, i.e.
\[
\mathbb{P} \left( \| (-\Delta)^{\alpha} \tilde{X}_s  \|^2_{L^2(K,\mu)} <+\infty \right)=1.
\]

From Theorem \ref{regularity fractional Laplacian}, one deduces
\[
\left| \tilde{X}_s (x) - \tilde{X}_s (y) \right| \le C_s d(x,y)^{d_w-d_h}\| (-\Delta)^{\alpha} \tilde{X}_s  \|_{L^2(K,\mu)},
\]
and the result follows.
\end{proof}

\section{Convergence of the discrete fields to the continuous fields}

In this section, our first main goal is to show for $s \ge 0$ the convergence in distribution in $\mathcal S'(K)$ of the approximations of discrete fractional Gaussian fields  on $V_m$ to the fractional Gaussian field  on the Sierpinski gasket. Our second goal will be to prove convergence in the Sobolev spaces $H^\alpha (K)$.

\subsection{Preliminary lemmas}

This section collects several lemmas that will later be needed.

\begin{lem}[\protect{\cite[Lemma 1.1]{BarlowPerkins}}]\label{lem:measure}
The sequence of measures $\{\mu_m\}_{m\ge 0}$ defined in \eqref{eq:measure} converges to the normalized Hausdorff measure $\mu$ on the Sierpinski gasket $K$ in the weak topology. That is, 
 \[
 \lim_{m\to \infty} \int_K fd\mu_m= \int_K fd\mu, \quad \forall f\in C(K).
 \]
\end{lem}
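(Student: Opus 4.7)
The plan is to exploit the self-similar cell decomposition of $K$ and the uniform continuity of $f\in C(K)$. First I would reduce to approximating $f$ by step functions adapted to the cell structure. For a word $w=(i_1,\ldots,i_n)\in\{1,2,3\}^n$ write $K_w=\mathfrak f_{i_1}\circ\cdots\circ\mathfrak f_{i_n}(K)$, so $K=\bigcup_{|w|=n}K_w$ with overlaps confined to a finite set. Since $\mathrm{diam}(K_w)=2^{-n}$ and $f$ is uniformly continuous, for every $\eta>0$ I can choose $n$ large enough that the oscillation of $f$ on each $K_w$ is at most $\eta$. Picking a point $x_w\in K_w$ for each $w$, the step function $f_n:=\sum_{|w|=n}f(x_w)\chi_{K_w}$ satisfies $\|f-f_n\|_\infty\le\eta$, and the problem reduces to showing
\[
\lim_{m\to\infty}\mu_m(K_w)=\mu(K_w)=3^{-n}\qquad\text{for every fixed }w.
\]

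Next I would compute $\mu_m(K_w)$ for $m\ge n$ directly from the definition of $V_m$. Because $V_{m}\cap K_w=\mathfrak f_w(V_{m-n})$, the cardinality is
\[
\#(V_m\cap K_w)=\#V_{m-n}=\frac{3(3^{m-n}+1)}{2},
\]
so
\[
\mu_m(K_w)=\frac{2}{3^{m+1}}\cdot\frac{3(3^{m-n}+1)}{2}=3^{-n}+3^{-m},
\]
which indeed tends to $3^{-n}$. A small bookkeeping point here is that cells of the same level share boundary vertices in $V_0$-type configurations, so the naive sum $\sum_{|w|=n}\mu_m(K_w)$ would slightly overcount; I would handle this by noting that the total overcount is bounded by $(3^n-1)\cdot\#V_0\cdot\tfrac{2}{3^{m+1}}$, which vanishes as $m\to\infty$, or equivalently by choosing a Borel partition of $K$ into disjoint subsets of the $K_w$ (assigning each boundary vertex to a unique cell) and repeating the same computation.

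Combining these, for any $f\in C(K)$ and any $\eta>0$ choose $n$ with $\|f-f_n\|_\infty\le\eta$, then pick $m$ large so that $|\mu_m(K_w)-3^{-n}|\le\eta\,3^{-n}$ uniformly in $|w|=n$ (only finitely many $w$). Then
\[
\Bigl|\int_K f\,d\mu_m-\int_K f\,d\mu\Bigr|\le 2\eta\,\mu_m(K)+\Bigl|\int_K f_n\,d\mu_m-\int_K f_n\,d\mu\Bigr|,
\]
where the last term is a finite sum over $|w|=n$ of $f(x_w)(\mu_m(K_w)-\mu(K_w))$, hence $\le C\|f\|_\infty\eta$. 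Since $\mu_m(K)\to 1$ is itself the case $n=0$ of the above count, $\eta$ being arbitrary concludes the proof.

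The main obstacle is the boundary overlap issue: the vertices sitting on shared edges of neighboring cells are counted by both cells, so one has to either pass to a disjoint Borel partition or track the boundary contribution and show it is $O(3^{n-m})$. Everything else is straightforward uniform-continuity approximation plus the explicit cardinality $\#V_m=\tfrac{3(3^m+1)}{2}$ scaled by $a_m^{-1}=\tfrac{2}{3^{m+1}}$.
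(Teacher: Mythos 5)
Your argument is correct, but it is worth noting that the paper does not prove this lemma at all: it is quoted directly from Barlow--Perkins \cite[Lemma 1.1]{BarlowPerkins}, so there is no internal proof to compare against. What you supply is a complete, self-contained and elementary verification, and the key computations check out: $V_m\cap K_w=\mathfrak f_w(V_{m-n})$ for $|w|=n\le m$, hence $\mu_m(K_w)=\tfrac{2}{3^{m+1}}\cdot\tfrac{3(3^{m-n}+1)}{2}=3^{-n}+3^{-m}\to\mu(K_w)$, and the overcount from the $\tfrac{3^{n+1}-3}{2}$ junction vertices shared by adjacent level-$n$ cells contributes only $O(3^{n-m})$ to the total mass, so passing to a disjoint Borel refinement of the cells (or just tracking that error) is legitimate. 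Combined with uniform continuity of $f$ on cells of diameter $2^{-n}$ and the fact that $\mu$ gives the overlap points measure zero, this yields weak convergence. Two cosmetic points: the bound $\bigl|\int_K f\,d\mu_m-\int_K f\,d\mu\bigr|\le\eta\,\mu_m(K)+\eta\,\mu(K)+\bigl|\int_K f_n\,d\mu_m-\int_K f_n\,d\mu\bigr|$ is what you actually get (your ``$2\eta\,\mu_m(K)$'' is harmless only because $\mu_m(K)=1+3^{-m}\ge 1$), and when you integrate $f_n=\sum_{|w|=n}f(x_w)\chi_{K_w}$ against $\mu_m$ you must already be using the disjointified cells, since the indicator sum double-counts the shared vertices, which carry positive $\mu_m$-mass. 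What your approach buys is independence from the external reference and an explicit rate: the argument shows the error for a fixed $f$ is controlled by $\mathrm{osc}_{2^{-n}}(f)+C\|f\|_\infty 3^{n-m}$, which the bare citation does not provide.
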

%
\begin{remark} \label{rem:discrete integral}
 Without abuse of notation, for any $g\in \ell(V_m)$, we may  write 
 \[
 \frac1{a_m}\sum_{p\in V_m} g(p)=\int_{V_m} g d\mu_m.
 \]
 Hence let $f_m=f|_{V_m}$ for $f\in C(K)$ in the lemma, one also has $ \lim_{m\to \infty} \int_{V_m} f_md\mu_m= \int_K fd\mu$. 
\end{remark}
\begin{lem}
[Convergence of discrete semigroups]\label{lem:HKconvergence}
 For all $f\in \mathcal S(K)$ and $t > 0$,
\[
\lim_{m\to \infty} \frac1{a_m}\sum_{p\in V_m} f_m(p)  P_t^m f_m(p)=\int_{K}f(x) P_tf(x) d\mu(x),
\]
where $f_m=f|_{V_m}$.
\end{lem}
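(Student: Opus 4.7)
The plan is to combine the spectral convergence theorem of Fukushima--Shima with a truncated eigenfunction expansion of $f$. First, I would establish semigroup convergence on each continuous eigenfunction: for every fixed $k\ge 1$,
\begin{equation*}
\bigl\|P_t^m (\Phi_k|_{V_m}) - e^{-\lambda_k t}\,\Phi_k|_{V_m}\bigr\|_{L^2(V_m,\mu_m)} \longrightarrow 0 \quad \text{as } m\to\infty.
\end{equation*}
To see this, expand $\Phi_k|_{V_m} = \sum_i \alpha_{ik}^m \Phi_i^m$ in the discrete eigenbasis, with $\alpha_{ik}^m := \int_{V_m} \Phi_k \Phi_i^m\, d\mu_m$, and compute
\[
\bigl\|P_t^m (\Phi_k|_{V_m}) - e^{-\lambda_k t}\Phi_k|_{V_m}\bigr\|_{L^2(V_m,\mu_m)}^2 = \sum_i (e^{-\lambda_i^m t} - e^{-\lambda_k t})^2 (\alpha_{ik}^m)^2.
\]
The spectral convergence result of \cite{FukushimaShima} gives $\lambda_k^m \to \lambda_k$ together with uniform convergence on $K$ of the harmonic extension of $\Phi_k^m$ to $\Phi_k$; combined with the weak convergence $\mu_m \to \mu$ of Lemma~\ref{lem:measure}, this yields $\alpha_{kk}^m \to 1$, while Parseval gives $\sum_i (\alpha_{ik}^m)^2 = \int_{V_m}\Phi_k^2\,d\mu_m \to 1$, whence $\sum_{i \neq k} (\alpha_{ik}^m)^2 \to 0$. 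Splitting the sum above into the $i = k$ contribution (which vanishes since $\lambda_k^m \to \lambda_k$) and the $i \neq k$ contribution (bounded by $4\sum_{i\neq k}(\alpha_{ik}^m)^2$) establishes the claim.

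Next I would approximate $f \in \mathcal S(K)$ by finite spectral sums. Writing $f = \sum_j c_j \Phi_j$ with $c_j = \int_K f\Phi_j\,d\mu$, the Schwartz-type decay of $c_j$ built into the definition of $\mathcal S(K)$, together with the polynomial bound $\|\Phi_j\|_\infty \le C \lambda_j^{d_h/(2d_w)}$ (a consequence of \eqref{eq:subGauss-upper} applied to the spectral representation \eqref{spectral} at time $t = 1/\lambda_j$), ensures that $f^{(J)} := \sum_{j=1}^J c_j \Phi_j$ converges uniformly on $K$ to $f$, so $\|R_J\|_\infty \to 0$ where $R_J := f - f^{(J)}$. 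Expanding $f = f^{(J)} + R_J$ bilinearly decomposes $\int_{V_m} f_m P_t^m f_m\, d\mu_m$ into a main term, a cross term, and a pure-remainder term; using the contraction $\|P_t^m g\|_{L^2(V_m,\mu_m)} \le \|g\|_{L^2(V_m,\mu_m)}$ together with $\|h|_{V_m}\|_{L^2(V_m,\mu_m)} \le C\|h\|_\infty$ (uniformly in $m$, since $\mu_m(V_m) \to 1$), both terms involving $R_J$ are bounded by $C'\|R_J\|_\infty$ uniformly in $m$.

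The main term is the finite double sum $\sum_{j,k\le J} c_j c_k \int_{V_m} \Phi_j \, P_t^m(\Phi_k|_{V_m})\, d\mu_m$. Using the first-step claim, one may replace $P_t^m(\Phi_k|_{V_m})$ by $e^{-\lambda_k t}\Phi_k|_{V_m}$ at the cost of an $L^2(V_m,\mu_m)$-error vanishing as $m\to\infty$; the weak convergence $\mu_m \to \mu$ applied to the now-continuous integrand $\Phi_j \Phi_k$ then produces the limit $\delta_{jk} c_k^2 e^{-\lambda_k t}$ for each pair $(j,k)$. Summing over $j,k \le J$, sending $m\to\infty$ first and $J\to\infty$ second, yields $\sum_k e^{-\lambda_k t} c_k^2 = \int_K f P_t f\,d\mu$ after a standard $\varepsilon$-$\delta$ chase (fix $J$ so the $R_J$ contributions are $<\varepsilon$ uniformly in $m$, then take $m$ large for the main term). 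The principal technical hurdle is the semigroup-convergence claim in the first step, which rests squarely on the eigenvalue and eigenfunction convergence results of \cite{FukushimaShima}; the remainder of the argument is a routine decomposition.
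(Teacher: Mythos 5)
Your route is genuinely different from the paper's. The paper does not touch the spectral decomposition at all: it observes that for $f\in\mathcal D_0$ one has $\Delta_m f_m\to\Delta f$ uniformly on $V_*^0$ (this is essentially the definition of the Kigami Laplacian), invokes the Trotter--Kurtz approximation theorem \cite[Theorem 2.1]{Kurtz} to upgrade this to uniform convergence $\sup_{p\in V_m}|P_t^m f_m(p)-P_tf(p)|\to 0$, and then concludes by writing $\int_{V_m}f_mP_t^mf_m\,d\mu_m=\int_{V_m}f_m(P_t^mf_m-(P_tf)_m)\,d\mu_m+\int_{V_m}f_m(P_tf)_m\,d\mu_m$ and applying the weak convergence $\mu_m\to\mu$ of Lemma~\ref{lem:measure} to the second term. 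That argument is shorter and avoids any discussion of eigenvalues; your eigenfunction-expansion scheme instead puts all the weight on the spectral convergence results of \cite{FukushimaShima}, and in exchange gives more quantitative information (rates in terms of $|\lambda_k^m-\lambda_k|$ and the decay of the $c_j$). Your truncation step is fine: the remainder estimates via the contraction property of $P_t^m$, the uniform bound $\mu_m(V_m)\to 1$, and the sup-norm bound $\|\Phi_j\|_\infty\le C\lambda_j^{d_h/(2d_w)}$ are all correct, and the $\varepsilon$--$\delta$ bookkeeping at the end is standard.

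There is, however, one concrete flaw in your first step. You assert $\alpha_{kk}^m\to 1$, justified by ``uniform convergence of (the harmonic extension of) $\Phi_k^m$ to $\Phi_k$.'' The Dirichlet spectrum of the Sierpinski gasket is highly degenerate --- multiplicities grow along the spectral-decimation hierarchy --- so for a repeated eigenvalue $\lambda_k=\lambda_{k+1}=\cdots$ the individual discrete eigenfunctions $\Phi_k^m$ are only determined up to an orthogonal rotation inside the discrete eigenspace, and $\alpha_{kk}^m=\int_{V_m}\Phi_k\Phi_k^m\,d\mu_m$ need not converge to $1$ for an arbitrary labelling; only the spectral projections converge. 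The good news is that your computation
\[
\bigl\|P_t^m(\Phi_k|_{V_m})-e^{-\lambda_k t}\Phi_k|_{V_m}\bigr\|_{L^2(V_m,\mu_m)}^2=\sum_i\bigl(e^{-\lambda_i^m t}-e^{-\lambda_k t}\bigr)^2(\alpha_{ik}^m)^2
\]
does not actually need $\alpha_{kk}^m\to 1$: it suffices that $\sum_{i\in I_k^m}(\alpha_{ik}^m)^2\to 1$, where $I_k^m$ is the cluster of indices $i$ with $\lambda_i^m\to\lambda_k$, since on that cluster $e^{-\lambda_i^m t}-e^{-\lambda_k t}\to 0$ uniformly and off it the factor $(\alpha_{ik}^m)^2$ carries vanishing mass. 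That cluster statement is what the eigenprojection convergence in \cite{FukushimaShima} actually delivers, so the step is repairable --- but as written the justification is incorrect, and you should restate it in terms of eigenvalue clusters rather than individual eigenfunctions.
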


\begin{proof}
We follow the strategy in \cite[Section 3.2.2]{CiprianiGinkel}. Recall the definition of Laplacian on $K$ in \eqref{eq:Laplacian} (see also  \cite[page 6]{FukushimaShima}), then for any $f\in \mathcal D_0$.
\[
\lim_{m\to \infty} \sup_{p\in V_m} |\Delta_mf_{m}(p)-\Delta f(p)|=0.
\]
 It follows from \cite[Theorem 2.1]{Kurtz} that for every $t \ge 0$
 \begin{equation}\label{eq:semigroup}
 \lim_{m\to \infty} \sup_{p\in V_m} |P_t^mf_m(p)-P_t f(p)|=0.
 \end{equation}
Indeed, the Laplacian on $K$ coincides with the extended limit of the sequence of operators $\{\Delta_m\}_{m\ge 0}$ defined in \cite[page 355]{Kurtz}.
Write 
\[
\frac1{a_m}\sum_{p\in V_m} f_m(p)P_t^m f_m(p)=\int_{V_m}f_mP_t^m f_md\mu_m,
\] 
and further
\[
\int_{V_m}f_mP_t^m f_md\mu_m=\int_{V_m}f_m\left(P_t^m f_m-(P_tf)_m\right)d\mu_m+\int_{V_m} f_m(P_tf)_md\mu_m.
\]
Taking the limit $m\to \infty$, the first term goes to zero from \eqref{eq:semigroup}. On the other hand, Lemma \ref{lem:measure} gives that $\int_{V_m}f_m(P_tf)_md\mu_m\to \int_K f P_t f d\mu$ and the proof is complete. 
\end{proof}
%

\begin{lem}
\label{lem:IPconvergence}
For all $f\in \mathcal S(K)$ and $s \ge 0$, when $m \to \infty$
\begin{equation}\label{eq:convergence}
\frac1{a_m} \sum_{p\in V_m} f_m(p) (-\Delta_m)^{-2s} f_m(p) \longrightarrow  \int_K f(x)(-\Delta)^{-2s}f(x)d\mu(x),
\end{equation}
where $f_m=f|_{V_m}$.
\end{lem}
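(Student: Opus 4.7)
The case $s=0$ follows directly from Lemma~\ref{lem:measure} (applied to the continuous function $f^2$), so assume $s>0$. The plan is to reduce to the semigroup convergence of Lemma~\ref{lem:HKconvergence} via the subordination identity
\[
(-\Delta)^{-2s}=\frac{1}{\Gamma(2s)}\int_0^{+\infty}t^{2s-1}P_t\,dt,\qquad
(-\Delta_m)^{-2s}=\frac{1}{\Gamma(2s)}\int_0^{+\infty}t^{2s-1}P_t^m\,dt.
\]
By Fubini the left-hand side of \eqref{eq:convergence} rewrites as $\Gamma(2s)^{-1}\int_0^{\infty}t^{2s-1}I_m(t)\,dt$, where
\[
I_m(t):=\int_{V_m}f_m\,P_t^m f_m\,d\mu_m,
\]
and analogously the right-hand side equals $\Gamma(2s)^{-1}\int_0^{\infty}t^{2s-1}I(t)\,dt$ with $I(t):=\int_K f\, P_t f\,d\mu$. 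It will therefore suffice to pass the limit $m\to\infty$ through the time integral by dominated convergence.

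Lemma~\ref{lem:HKconvergence} provides the pointwise convergence $I_m(t)\to I(t)$ for every fixed $t>0$. I would then construct a uniform integrable majorant of $t^{2s-1}|I_m(t)|$ by splitting $(0,\infty)$ into $(0,1]$ and $[1,\infty)$. On $(0,1]$, the sub-Markov property of $P_t^m$ on $\ell^\infty(V_m)$ (the discrete Dirichlet semigroup is a contraction for the supremum norm) yields $\|P_t^m f_m\|_{\ell^\infty(V_m)}\le\|f\|_\infty$, and since $\mu_m(V_m)=\#V_m/a_m$ is bounded in $m$, one obtains $|I_m(t)|\le C\|f\|_\infty^2$; thus $t^{2s-1}|I_m(t)|\le C\,t^{2s-1}\in L^1(0,1)$ because $s>0$. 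On $[1,\infty)$, expanding $f_m$ in the orthonormal eigenbasis $(\Phi_j^m)$ of $-\Delta_m$ gives $I_m(t)=\sum_j (c_j^m)^2 e^{-\lambda_j^m t}\ge 0$, where $c_j^m=\int_{V_m} f_m\Phi_j^m\,d\mu_m$, which is bounded by $e^{-\lambda_1^m t}\|f_m\|_{L^2(\mu_m)}^2$.

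The main obstacle is to upgrade this to a genuinely $m$-uniform exponential decay, i.e.\ to establish $\inf_m \lambda_1^m>0$. This is provided by the Fukushima--Shima spectral decimation on the gasket: for each fixed $j$ one has $\lambda_j^m\to\lambda_j$ as $m\to+\infty$, so in particular $\lambda_1^m\to\lambda_1>0$ and the infimum is positive. Combined with $\|f_m\|_{L^2(\mu_m)}^2\to\|f\|_{L^2(\mu)}^2$ (again from Lemma~\ref{lem:measure} applied to $f^2$), this yields the uniform bound $t^{2s-1}|I_m(t)|\le C e^{-ct}$ on $[1,\infty)$. Dominated convergence then delivers \eqref{eq:convergence} and concludes the proof.
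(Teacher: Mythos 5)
Your proposal is correct and follows essentially the same route as the paper: subordination plus Fubini to reduce to the time integral of $I_m(t)$, pointwise convergence from Lemma \ref{lem:HKconvergence}, and dominated convergence using $\inf_m\lambda_1^m>0$ together with the boundedness of $\|f_m\|_{L^2(V_m,\mu_m)}^2$. The only cosmetic difference is that you split the time axis at $t=1$ and use the $\ell^\infty$ contraction near $0$, whereas the paper's single spectral bound $t^{2s-1}e^{-\lambda_1^m t}\cdot C$ is already integrable on all of $(0,\infty)$ since $s>0$.
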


\begin{proof}
For $s=0$, the result follows immediately from Lemma \ref{lem:measure}. We now assume $s>0$. Notice that 
\[
(-\Delta_m)^{-2s}f_m =\frac{1}{\Gamma(2s)} \int_0^{+\infty} t^{2s-1} P_t^m f_m dt.
\]
By Fubini's theorem, we therefore have
\[
\frac1{a_m} \sum_{p\in V_m} f_m(p)(-\Delta_m)^{-2s}f_m(p) =\frac{1}{\Gamma(2s)} \int_0^{+\infty} t^{2s-1} \frac1{a_m} \sum_{p\in V_m} f_m(p)P_t^m f_m(p) dt.
\]
Let us now note that by spectral theory
\[
\frac1{a_m} \sum_{p\in V_m} f_m(p)P_t^m f_m(p) \le e^{-\lambda_1^m t} \frac1{a_m} \sum_{p\in V_m} f_m(p)^2.
\]
Notice that 
$\sup_m  \frac1{a_m} \sum_{p\in V_m} f_m(p)^2 <+\infty$. Hence we deduce from  the dominated convergence theorem and Lemma \ref{lem:HKconvergence}  that 
\begin{align*}
\lim_{m\to \infty}\frac1{a_m} \sum_{p\in V_m} f_m(p) (-\Delta_m)^{-2s} f_m(p)
&=
\frac{1}{\Gamma(2s)} \int_0^{+\infty} t^{2s-1} \lim_{m\to \infty}\frac1{a_m} \sum_{p\in V_m} f_m(p)P_t^m f_m(p) dt
\\ &=
\frac{1}{\Gamma(2s)} \int_0^{+\infty} t^{2s-1}  \int_K f(x)P_tf(x)d\mu(x) dt
\\ &=
\int_K f(x)(-\Delta)^{-2s}f(x)d\mu(x).
\end{align*}

\end{proof}

\subsection{Convergence in distribution in  \texorpdfstring{$\mathcal S'(K)$}{SK}}

We are now in position to prove the following result.

\begin{thm}
Let $s \ge 0$. When $m \to \infty$, $X_s^m$ converges to $X_s$ in distribution in $\mathcal S'(K)$.
\end{thm}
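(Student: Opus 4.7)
The plan is to invoke a Lévy continuity theorem on the nuclear Fréchet space $\mathcal{S}(K)$ to reduce the problem to pointwise convergence of characteristic functionals, which in turn will follow directly from Lemma \ref{lem:IPconvergence}.

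First, I would realize each $X_s^m$ as a random element of $\mathcal{S}'(K)$ by pairing it with test functions: for $f \in \mathcal{S}(K) \subset C(K)$, the restriction $f_m := f|_{V_m}$ lies in $\ell(V_m)$, and we set
\[
X_s^m(f) := \frac{1}{a_m} \sum_{p \in V_m} f_m(p)\, X_s^m(p).
\]
Formula \eqref{eq:DCov} then shows that $X_s^m(f)$ is a centered Gaussian random variable with variance $\frac{1}{a_m} \sum_{p \in V_m} f_m(p) (-\Delta_m)^{-2s} f_m(p)$, so the characteristic functional of $X_s^m$ (viewed as a random element of $\mathcal{S}'(K)$) is
\[
\varphi_m(f) = \exp\!\left(-\tfrac{1}{2a_m}\sum_{p\in V_m} f_m(p) (-\Delta_m)^{-2s} f_m(p)\right).
\]

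Next, Lemma \ref{lem:IPconvergence} gives that for every fixed $f \in \mathcal{S}(K)$ the exponent converges to $-\tfrac{1}{2}\int_K f(-\Delta)^{-2s} f\, d\mu = -\tfrac{1}{2}\int_K |(-\Delta)^{-s} f|^2\, d\mu$. Thus $\varphi_m(f) \to \varphi(f)$ pointwise on $\mathcal{S}(K)$, where $\varphi$ is precisely the characteristic functional of $X_s$ appearing in the proof of Theorem \ref{existence FGF}. In particular $\varphi$ is positive definite and continuous at $0$ on the nuclear topology of $\mathcal{S}(K)$, as was established there.

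To upgrade pointwise convergence of characteristic functionals to weak convergence of laws on $\mathcal{S}'(K)$, I would invoke the nuclear-space version of the Lévy continuity theorem (Mitoma/Fernique): for probability measures on the strong dual of a nuclear Fréchet space, pointwise convergence of the characteristic functionals toward a functional that is continuous at zero is sufficient for weak convergence. The main technical point, which I expect to be the only mild obstacle, is to check the equicontinuity of the family $\{\varphi_m\}$ that typically enters such theorems. This is straightforward in our setting: since $\inf_m \lambda_1^m > 0$ (as recalled in the proof of Lemma \ref{lem:IPconvergence}) and $\|f_m\|_\infty \le \|f\|_\infty$ for $f\in C(K)$, spectral theory gives
\[
\mathbb{E}[X_s^m(f)^2] \le (\inf_m \lambda_1^m)^{-2s}\, \frac{1}{a_m}\sum_{p\in V_m} f_m(p)^2 \le C\, \|f\|_\infty^2,
\]
uniformly in $m$. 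Since $\mathcal{S}(K)$ injects continuously into $C(K)$, this produces the required equicontinuity of $\{\varphi_m\}$ at $0$, and the nuclear Lévy continuity theorem then concludes the proof.
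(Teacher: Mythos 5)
Your proposal is correct and follows essentially the same route as the paper: reduce to pointwise convergence of characteristic functionals via the nuclear-space Lévy continuity theorem, compute $\mathbb{E}[\exp(iX_s^m(f_m))]$ from \eqref{eq:DCov}, and conclude by Lemma \ref{lem:IPconvergence}. Your additional uniform variance bound $\mathbb{E}[X_s^m(f)^2]\le C\|f\|_\infty^2$ is a harmless (and valid) extra check of equicontinuity that the paper leaves implicit in its citation of the continuity theorem.
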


\begin{proof}
We aim to prove $X_s^m \to X_s$ in law in $\mathcal S'(K)$. Since $\mathcal S(K)$ is a nuclear space, it suffices to prove the convergence of the characteristic functional (see for instance \cite[Th\'eor\`eme 2]{Meyer}). That is, for every $f\in \mathcal S(K)$, when $m \to \infty$
\[
\mathbb E \left[ \exp\left(i X_s^m(f_m)\right)\right]\longrightarrow \mathbb E \left[ \exp\left(i X_s(f)\right)\right],
\]
 where $f_m=f|_{V_m}$. It follows from \eqref{eq:DCov} that
\begin{align*}
\mathbb E  \left[\exp\left(i \,X_s^m(f_m)\right)  \right]
=\exp\left(-\frac12 \mathbb E \left((X_s^m(f_m))^2\right)\right) 
=\exp\left(-\frac1{2a_m} \sum_{p\in V_m} f_m(p)(-\Delta_m)^{-2s} f_m(p)\right).
\end{align*}
Similarly, Definition \ref{FLaplacian} gives that $  \mathbb E \left[  \exp\left(i X_s(f)\right) \right]=\exp\left(-\frac12 \int_K f(-\Delta)^{-2s} fd\mu\right)$. The conclusion therefore follows from Lemma \ref{lem:IPconvergence}.
\end{proof}

\subsection{Convergence in distribution in  Sobolev spaces}
Recall the Sobolev space $H^{\alpha}$ and the dual space $H^{-\alpha}$ defined in Section 3.1. In this section, we aim to prove the convergence of lifted DFGF  in the Sobolev space $H^{-\alpha}$ for appropriate $\alpha>0$. Following the scheme in \cite{CiprianiGinkel}, we first lift $X_s^m$ on $V_m$ to $K$ using Voronoi cells
defined by 
\[
C_p^m=\{x\in K:d(x,p)\le d(x,q), \forall q\in V_m\}, \quad p\in V_m.
\]
Equivalently, one has $C_p^m=\{x\in K:d(x,p)\le  2^{-(m+1)}\}$. 
\begin{defn}[DFGF in $H^{-\alpha}(K)$]
Let $X_s^m$ be the DFGF on  $V_m$ as in Definition \ref{def:DFGF}. We define $\bar X_s^m\in H^{-\alpha}(K)$ such that for $f\in H^{\alpha}(K)$
\[
\bar X_s^{\,m}(f)=\frac1{a_m}\sum_{p\in V_m} X_s^m(p)\bar f_m(p)=X_s^m(\bar f_m),
\]
where $\bar f_m(p):=\frac{1}{\mu(C_p^m)}\int_{C_p^m} f(x)d\mu(x)$ for any $p\in V_m$.
\end{defn}

Our main result in this section is the following theorem.
\begin{thm}\label{thm: CovergenceLaw}
Let $s \ge 0$. The Gaussian fields $\bar X_s^{\,m}$ converge in law to $X_s$  in the strong topology of $H^{-\alpha}(K)$ for $\alpha>2d_h/d_w$.
\end{thm}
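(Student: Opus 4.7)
The plan is to combine two ingredients: (a) convergence in law of $\bar X_s^{\,m}$ to $X_s$ in $\mathcal{S}'(K)$, and (b) tightness of the family $(\bar X_s^{\,m})_{m \ge 0}$ in the strong topology of $H^{-\alpha}(K)$. These two, together with the density of $\mathcal{S}(K)$ in $H^\alpha(K)$, yield convergence in law in $H^{-\alpha}(K)$ via Prokhorov's theorem.

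For (a), I would essentially repeat the argument of the preceding theorem. Since $\bar X_s^{\,m}(f) = X_s^m(\bar f_m)$ is a centered Gaussian for every $f \in \mathcal{S}(K)$, convergence in law reduces to the variance convergence
\[
\frac{1}{a_m}\sum_{p\in V_m}\bar f_m(p)(-\Delta_m)^{-2s}\bar f_m(p) \longrightarrow \int_K f(-\Delta)^{-2s}f\, d\mu.
\]
As $f \in C(K)$ is uniformly continuous on the compact $K$ and the cells $C_p^m$ have diameter at most $2^{-(m+1)}$, one has $\sup_{p\in V_m}|\bar f_m(p) - f_m(p)| \to 0$ with $f_m = f|_{V_m}$. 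Combined with the uniform operator bound $\|(-\Delta_m)^{-2s}\|_{L^2(V_m,\mu_m)\to L^2(V_m,\mu_m)} \le (\inf_m \lambda_1^m)^{-2s}<\infty$ and Lemma~\ref{lem:IPconvergence} applied to $f_m$, this gives the variance convergence.

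For (b), fix $\alpha_0 \in (2d_h/d_w, \alpha)$. By Lemma~\ref{lem:Sobolev dual} and the discrete covariance formula \eqref{eq:DCov} applied to $\bar\Phi_{j,m}$,
\[
\mathbb{E}\bigl(\|\bar X_s^{\,m}\|_{-\alpha_0}^2\bigr) = \sum_{j \ge 1}\lambda_j^{-\alpha_0}\langle\bar\Phi_{j,m},(-\Delta_m)^{-2s}\bar\Phi_{j,m}\rangle_{L^2(V_m,\mu_m)}.
\]
Cauchy--Schwarz and the discrete spectral gap $\inf_m \lambda_1^m > 0$ then give
\[
\langle\bar\Phi_{j,m},(-\Delta_m)^{-2s}\bar\Phi_{j,m}\rangle_{L^2(V_m,\mu_m)} \le C \|\bar\Phi_{j,m}\|_{L^\infty(V_m)}^2 \le C \|\Phi_j\|_{L^\infty(K)}^2 \le C \lambda_j^{d_h/d_w},
\]
where the last bound is the classical eigenfunction estimate $\|\Phi_j\|_{L^\infty(K)} \le C \lambda_j^{d_h/(2d_w)}$, obtained by writing $\Phi_j = e^{\lambda_j t} P_t \Phi_j$ and using the $L^2 \to L^\infty$ ultracontractivity $\|P_t\|_{L^2 \to L^\infty} \le C t^{-d_h/(2d_w)}$ --- itself a direct consequence of \eqref{eq:subGauss-upper} --- optimized at $t = 1/\lambda_j$. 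Hence $\mathbb{E}(\|\bar X_s^{\,m}\|_{-\alpha_0}^2) \le C\sum_j \lambda_j^{-\alpha_0 + d_h/d_w}$, which converges precisely when $\alpha_0 > 2d_h/d_w$ by Weyl's law \eqref{Weyl_gasket}.

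Since $\lambda_j \to \infty$, the canonical injection $H^{-\alpha_0}(K)\hookrightarrow H^{-\alpha}(K)$ is compact, so Markov's inequality applied to the uniform second-moment bound in (b) yields tightness of $(\bar X_s^{\,m})$ in the strong topology of $H^{-\alpha}(K)$. By Prokhorov, any subsequence has a further subsequence converging in law in $H^{-\alpha}(K)$; testing against $f \in \mathcal{S}(K) \subset H^\alpha(K)$ and using (a), every subsequential limit has the same finite-dimensional marginals as $X_s$, and density of $\mathcal{S}(K)$ in $H^\alpha(K)$ (together with the uniform second-moment control) extends this identification to all of $H^\alpha(K)$, so the limit law is uniquely that of $X_s$. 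The main technical obstacle is the uniform variance estimate in (b), which rests on combining the discrete spectral gap, the $L^\infty$ control of the Voronoi averages $\bar\Phi_{j,m}$, and the ultracontractive eigenfunction bound; the precise threshold $\alpha > 2d_h/d_w$ arises from the competition between the growth $\lambda_j^{d_h/d_w}$ coming from the eigenfunction sup-norm bound and the Weyl decay $\sum_j \lambda_j^{-\alpha_0}$.
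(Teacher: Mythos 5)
Your proposal is correct and follows essentially the same route as the paper: tightness from the uniform second-moment bound in $H^{-\alpha_0}(K)$ for $\alpha_0\in(2d_h/d_w,\alpha)$ (combining the discrete spectral gap $\inf_m\lambda_1^m>0$, the $L^\infty$ control of the Voronoi averages $\bar\Phi_{j,m}$ by $\|\Phi_j\|_{L^\infty}$, the ultracontractive eigenfunction bound $\|\Phi_j\|_{L^\infty}\le C\lambda_j^{d_h/(2d_w)}$ and Weyl's law), then compactness of the embedding and identification of the subsequential limits by testing against $\mathcal S(K)$ and density. The only (harmless) deviation is that you establish $\|\bar f_m-f_m\|_{L^2(V_m,\mu_m)}\to 0$ via uniform continuity of $f$ on the compact $K$, whereas the paper derives it quantitatively from the H\"older estimate of Lemma~\ref{holder regu}; both suffice for the variance convergence.
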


Throughout the section we assume that $s \ge 0$. The proof is divided into two parts. We will first show  the tightness of the sequence $(\bar X_s^{\,m})_{m\ge 1}$ in $H^{-\alpha}(K)$. Thus every sequence has a convergent subsequence. The second part is to show that the limit is unique. 

We first state the following lemma for the sequel use. Let $j\ge 1$. Recall that $\lambda_j$ is the $j$-th eigenvalue of $\Delta$ on $K$ and $\Phi_j$ is the corresponding eigenfunction.
\begin{lem}\label{lem:L-infinity}
For any $j\ge 1$, we have
\[
\|\Phi_j\|_{L^\infty(K,\mu)} \le C \lambda_j^{d_h/(2d_w)}.
\]
\end{lem}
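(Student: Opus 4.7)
The plan is to use the spectral expansion \eqref{spectral} of the heat kernel together with the on-diagonal upper bound \eqref{eq:subGauss-upper}, which is a standard strategy for deriving $L^\infty$ bounds on eigenfunctions from heat kernel estimates.

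First I would restrict the spectral representation
\[
p_t(x,y)=\sum_{k=1}^{+\infty} e^{-\lambda_k t}\Phi_k(x)\Phi_k(y)
\]
to the diagonal $y=x$. Since $\Phi_k(x)^2 \ge 0$ for each $k$, every term of the resulting series is nonnegative, so isolating the $j$-th term yields the pointwise inequality
\[
e^{-\lambda_j t}\,\Phi_j(x)^2 \le p_t(x,x)\qquad \text{for every } x\in K \text{ and } t>0.
\]

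Next, I would invoke the sub-Gaussian upper bound \eqref{eq:subGauss-upper} evaluated at $y=x$, which simplifies to $p_t(x,x)\le c_1 t^{-d_h/d_w}$ uniformly in $x$. Combining the two estimates gives
\[
\Phi_j(x)^2 \le c_1\, e^{\lambda_j t}\, t^{-d_h/d_w}\qquad \text{for every } t>0.
\]
The bound being uniform in $x\in K$, the final step is to optimize the right-hand side in $t$. A direct calculus exercise shows that the minimum of $e^{\lambda_j t} t^{-d_h/d_w}$ is attained at $t=\frac{d_h}{d_w\lambda_j}$ and equals, up to a constant depending only on $d_h$ and $d_w$, the quantity $\lambda_j^{d_h/d_w}$. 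Taking square roots and the supremum over $x\in K$ produces the announced bound $\|\Phi_j\|_{L^\infty(K,\mu)} \le C \lambda_j^{d_h/(2d_w)}$.

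There is no serious obstacle here: each eigenfunction $\Phi_j$ lies in $\mathcal{D}(\Delta)$ and is therefore H\"older continuous by Lemma \ref{holder regu}, so the pointwise bound holds on all of $K$ and the supremum is a genuine sup-norm. The only subtlety is the correct choice of $t$ to balance the exponential growth against the polynomial heat-kernel blow-up, which is determined unambiguously by the Ahlfors/walk exponent $d_h/d_w$ appearing in \eqref{eq:subGauss-upper}.
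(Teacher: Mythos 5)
Your proposal is correct and follows essentially the same route as the paper: both arguments reduce to the on-diagonal bound $e^{-\lambda_j t}\Phi_j(x)^2 \le p_t(x,x) \le c_1 t^{-d_h/d_w}$ followed by the choice $t \sim \lambda_j^{-1}$. The only cosmetic difference is that you obtain this inequality by dropping the nonnegative terms in the diagonal spectral expansion \eqref{spectral}, whereas the paper derives the equivalent bound by writing $\Phi_j = e^{\lambda_j t_0}P_{t_0}\Phi_j$ and applying Cauchy--Schwarz against the heat kernel.
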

\begin{proof}
We use spectral theory (as in the proof of \cite[Lemma 3.4]{BC}).
Notice that $P_t\Phi_j=e^{-\lambda_j t}\Phi_j$.
Using the Cauchy-Schwartz inequality and \eqref{eq:subGauss-upper}, we obtain
for $\mu$-a.e. $x\in K$, 
\[
|\Phi_j(x)|=e^{\lambda_j t_0}|P_{t_0} \Phi_j(x)| \le e^{\lambda_j t_0}\left( \int_K p_{t_0}(x,y) ^2d\mu(y)\right)^{1/2}
\le Ct_0^{-d_h/(2d_w)} e^{\lambda_j t_0}.
\] 
In particular, taking $t_0=\lambda_j^{-1}$ leads to 
\[
|\Phi_j(x)|\le  C \lambda_j^{d_h/(2d_w)}, \quad\mu\text{-a.e. } x\in K.
\]
\end{proof}

\begin{prop}
The sequence $(\bar X_s^{\,m})_{m\ge 1}$ is tight in $H^{-\alpha}(K)$ for any $\alpha>2d_h/d_w$.
\end{prop}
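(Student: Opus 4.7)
The plan is to reduce tightness to a uniform second-moment bound in a slightly smaller Sobolev space and then invoke the compact embedding between Sobolev spaces. Concretely, I will pick an auxiliary exponent $\alpha'$ with $2d_h/d_w<\alpha'<\alpha$ and show that $\sup_m \mathbb{E}\big[\|\bar X_s^{\,m}\|_{-\alpha'}^2\big]<\infty$. Since the inclusion $H^{-\alpha'}(K)\hookrightarrow H^{-\alpha}(K)$ is compact (indeed, in the spectral norm from Lemma \ref{lem:Sobolev dual}, the tail $\sum_{j>N}\lambda_j^{-\alpha}\psi(\Phi_j)^2$ is dominated by $\lambda_N^{\alpha'-\alpha}\|\psi\|_{-\alpha'}^2\to 0$, so closed balls of $H^{-\alpha'}$ are compact in $H^{-\alpha}$), a uniform $L^2$ bound in $H^{-\alpha'}$ combined with Markov's inequality delivers tightness of the laws of $\bar X_s^{\,m}$ on $H^{-\alpha}(K)$.

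To carry out the second-moment estimate, I will use the explicit spectral expansion of the Sobolev norm, namely
\[
\mathbb{E}\big[\|\bar X_s^{\,m}\|_{-\alpha'}^2\big]=\sum_{j=1}^{\infty}\lambda_j^{-\alpha'}\,\mathbb{E}\big[\bar X_s^{\,m}(\Phi_j)^2\big],
\]
and compute each term via the definition $\bar X_s^{\,m}(\Phi_j)=X_s^m(\overline{\Phi_{j,m}})$ together with the covariance formula \eqref{eq:DCov}. This gives
\[
\mathbb{E}\big[\bar X_s^{\,m}(\Phi_j)^2\big]=\frac{1}{a_m}\sum_{p\in V_m}\overline{\Phi_{j,m}}(p)\,(-\Delta_m)^{-2s}\overline{\Phi_{j,m}}(p).
\]
Since $(-\Delta_m)^{-2s}$ is a positive self-adjoint operator on $(\ell(V_m),\mu_m)$ with operator norm $(\lambda_1^m)^{-2s}$, I will bound the right-hand side by $(\lambda_1^m)^{-2s}\|\overline{\Phi_{j,m}}\|_{L^2(\mu_m)}^2$. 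Using $|\overline{\Phi_{j,m}}(p)|\le\|\Phi_j\|_{L^\infty(K,\mu)}$ together with Lemma \ref{lem:L-infinity}, this yields
\[
\mathbb{E}\big[\bar X_s^{\,m}(\Phi_j)^2\big]\le C(\lambda_1^m)^{-2s}\lambda_j^{d_h/d_w}.
\]
The crucial ingredient, already recalled in the proof of Lemma \ref{lem:IPconvergence}, is that $\inf_{m}\lambda_1^m>0$, so the constant can be made uniform in $m$.

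Combining the two steps gives $\mathbb{E}\big[\|\bar X_s^{\,m}\|_{-\alpha'}^2\big]\le C\sum_{j}\lambda_j^{d_h/d_w-\alpha'}$, and the Weyl asymptotic \eqref{Weyl_gasket} (which implies $\lambda_j\asymp j^{d_w/d_h}$ up to a bounded factor) forces convergence of this series precisely when $\alpha'>2d_h/d_w$. Since $\alpha>2d_h/d_w$ by hypothesis, such an $\alpha'\in(2d_h/d_w,\alpha)$ exists, and Markov's inequality then shows that for every $\varepsilon>0$ the set $K_\varepsilon=\{\psi:\|\psi\|_{-\alpha'}\le R_\varepsilon\}$, which is compact in $H^{-\alpha}(K)$, satisfies $\mathbb{P}(\bar X_s^{\,m}\in K_\varepsilon)\ge 1-\varepsilon$ for all $m$, giving tightness in the strong topology of $H^{-\alpha}(K)$.

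I expect the only delicate point to be the bound $\mathbb{E}\big[\bar X_s^{\,m}(\Phi_j)^2\big]\lesssim\lambda_j^{d_h/d_w}$ uniformly in $m$: one must argue carefully that $|\overline{\Phi_{j,m}}(p)|\le\|\Phi_j\|_{L^\infty}$ (immediate from the definition as a $\mu$-average), invoke Lemma \ref{lem:L-infinity} in a way that is independent of $m$, and use the $m$-uniform spectral gap $\inf_m\lambda_1^m>0$ to handle the factor $(\lambda_1^m)^{-2s}$. Everything else is routine functional analysis.
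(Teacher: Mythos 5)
Your proposal is correct and follows essentially the same route as the paper's proof: a uniform second-moment bound $\sup_m\mathbb{E}\big[\|\bar X_s^{\,m}\|_{-\alpha'}^2\big]<\infty$ for some $\alpha'\in(2d_h/d_w,\alpha)$ obtained from Lemma \ref{lem:Sobolev dual}, the covariance formula \eqref{eq:DCov}, the bound $(\lambda_1^m)^{-2s}$ with $\inf_m\lambda_1^m>0$, Lemma \ref{lem:L-infinity} and Weyl asymptotics, followed by Chebyshev's inequality and the compactness of closed $H^{-\alpha'}$-balls in $H^{-\alpha}$. The only cosmetic difference is that you work directly at the auxiliary exponent $\alpha'$ (and sketch the compact embedding explicitly), whereas the paper first establishes the estimate at level $\alpha$ and then observes it holds for $\alpha'$ as well, invoking Rellich's theorem.
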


\begin{proof}
We will first prove that for any $\varepsilon>0$, there exists  $R=R(\varepsilon)>0$ such that for all $m\ge 0$
\begin{equation}\label{eq:tightness}
\mathbb P(\|\bar X_s^{\,m}\|_{H^{-\alpha}(K)}^2> R)\le \varepsilon.
\end{equation}
Note that by Chebyshev's inequality, 
\[
\mathbb P(\|\bar X_s^{\,m}\|_{H^{-\alpha}(K)}^2> R)\le \frac1R \mathbb E(\|\bar X_s^{\,m}\|_{H^{-\alpha}(K)}^2).
\]
From Lemma \ref{lem:Sobolev dual},  we can write $\mathbb E(\|\bar X_s^{\,m}\|_{H^{-\alpha}(K)}^2)$ as
\[
\mathbb E\Bigg(\sum_{j=1}^{\infty} \lambda_j^{-\alpha} \left(\bar X_s^{\,m}(\Phi_j)\right)^2 \Bigg)
=\sum_{j=1}^{\infty}  \lambda_j^{-\alpha} \mathbb E\left( \left(X_s^{\,m}((\bar \Phi_j)_m)\right)^2 \right).
\]
Noticing that from \cite[Lemma 5.2]{FukushimaShima} one has  $\inf_m \lambda_1^m>0$, then applying \eqref{eq:DCov} gives
\[
\mathbb E\left( \left(X_s^{\,m}((\bar \Phi_j)_m)\right)^2 \right)=\frac{1}{a_m}\sum_{p\in V_m}\bar \Phi_j(p) (-\Delta_m)^{-2s}\bar \Phi_j(p)
\le (\lambda_1^m)^{-2s} \|\bar\Phi_j\|_{L^2(V_m,\mu_m)}^2.
\]
Observe that by Lemma \ref{lem:L-infinity} one has
\[ 
\|\bar\Phi_j\|_{L^2(V_m,\mu_m)}^2 \le 2\|\Phi_j\|_{L^\infty(K,\mu)}^2  \le C \lambda_j^{d_h/d_w}.
\] 
Besides, Weyl's eigenvalue asymptotics \eqref{Weyl_gasket} yields that  $\lambda_j\sim j^{d_w/d_h}$. Hence
\[
\mathbb E\left(\|\bar X_s^{\,m}\|_{H^{-\alpha}(K)}^2\right) \le C\sum_{j=1}^{\infty} \lambda_j^{-\alpha+d_h/d_w}
\le C\sum_{j=1}^{\infty} j^{\big(\frac{d_h}{d_w}-\alpha\big) \frac{d_w}{d_h}}.
\]
The above series is bounded if $1-\frac{\alpha d_w}{d_h}<-1$, i.e., $\alpha>2d_h/d_w$. Hence \eqref{eq:tightness} holds.

Now fix $\alpha>2d_h/d_w$. Then \eqref{eq:tightness} holds for any $\alpha'\in(2d_h/d_w,\alpha)$ and any $\varepsilon>0$. Equivalently,  there exists $R>0$ such that 
\[
\mathbb P\Big(\bar X_s^{\,m}\notin \overline{B_{-\alpha'}(0,R)}\Big)\le \varepsilon,
\]
where $\overline{B_{-\alpha'}(0,R)}$ denotes the closed ball with radius $R$ and center $0$ in $H^{-\alpha'}$. To conclude the proof, it suffices to show that $\overline{B_{-\alpha'}(0,R)}$ is compact in  $H^{-\alpha}$. Indeed, this can be seen from Rellich's theorem, i.e., the embedding $H^{\alpha} \hookrightarrow H^{\beta}$ is compact for $\beta<\alpha$, see the proof of \cite[Theorem 3.15]{CDH}.

\end{proof}

\begin{prop}
For any $f\in \mathcal S(K)$, one has $\bar X_s^{\,m}(f) \longrightarrow X_s(f)$ as $m\to \infty$.
\end{prop}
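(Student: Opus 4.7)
The plan is to reduce the convergence $\bar X_s^{\,m}(f) \to X_s(f)$ to convergence of the second moments. Since $\bar X_s^{\,m}(f) = X_s^m(\bar f_m)$ and $X_s(f)$ are both centered Gaussian random variables, it is enough to show $\mathbb{E}[\bar X_s^{\,m}(f)^2] \to \mathbb{E}[X_s(f)^2]$; this will give the convergence in law that is needed to identify the limit in the proof of Theorem \ref{thm: CovergenceLaw}. By \eqref{eq:DCov}, $\mathbb{E}[\bar X_s^{\,m}(f)^2] = \frac{1}{a_m}\sum_{p\in V_m}\bar f_m(p)(-\Delta_m)^{-2s}\bar f_m(p)$, while by Theorem \ref{existence FGF}, $\mathbb{E}[X_s(f)^2] = \int_K f\,(-\Delta)^{-2s}f\,d\mu$.

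The next step is to introduce the plain restriction $f_m := f|_{V_m}$ and split
\[
\frac{1}{a_m}\sum_{p\in V_m}\bar f_m(p)(-\Delta_m)^{-2s}\bar f_m(p) = \frac{1}{a_m}\sum_{p\in V_m} f_m(p)(-\Delta_m)^{-2s} f_m(p) + E_m,
\]
where, using self-adjointness of $(-\Delta_m)^{-2s}$ with respect to the inner product of $L^2(V_m, \mu_m)$,
\[
E_m = \big\langle \bar f_m - f_m,\ (-\Delta_m)^{-2s}(\bar f_m + f_m)\big\rangle_{L^2(\mu_m)}.
\]
Lemma \ref{lem:IPconvergence} already takes care of the first term on the right, which converges to $\int_K f\,(-\Delta)^{-2s}f\,d\mu$, so the task reduces to showing $E_m \to 0$.

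For the error, I would combine Cauchy--Schwarz with the spectral bound $\|(-\Delta_m)^{-2s}\|_{L^2(\mu_m)\to L^2(\mu_m)} \le (\lambda_1^m)^{-2s}$ to obtain
\[
|E_m| \le (\lambda_1^m)^{-2s}\,\|\bar f_m - f_m\|_{L^2(\mu_m)}\,\|\bar f_m + f_m\|_{L^2(\mu_m)}.
\]
The hinge of the whole argument is the uniform spectral gap $\inf_m \lambda_1^m > 0$ recalled from \cite{FukushimaShima}, which makes the prefactor bounded in $m$. The factor $\|\bar f_m + f_m\|_{L^2(\mu_m)}$ is uniformly bounded because $f \in \mathcal{S}(K) \subset C_0(K)$ is bounded and the total masses $\mu_m(V_m)$ are uniformly bounded. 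Finally, $\|\bar f_m - f_m\|_{L^2(\mu_m)} \to 0$, since each Voronoi cell $C_p^m$ has diameter at most $2^{-m}$ and $f$ is uniformly continuous on the compact set $K$, so that $\sup_{p\in V_m}|\bar f_m(p) - f_m(p)| \to 0$ and the $L^2(\mu_m)$ norm follows. The argument is insensitive to the value of $s \ge 0$ and invokes no additional structure beyond what is already recorded in the preliminary lemmas.
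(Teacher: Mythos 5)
Your proof is correct and follows the paper's overall strategy: reduce to convergence of second moments of centered Gaussians, invoke Lemma \ref{lem:IPconvergence} for the term with the plain restriction $f_m$, and kill the error via Cauchy--Schwarz together with the uniform spectral gap $\inf_m\lambda_1^m>0$. There are two points where you deviate, both harmless and one genuinely instructive. First, you package the error as the single symmetric term $\langle \bar f_m-f_m,(-\Delta_m)^{-2s}(\bar f_m+f_m)\rangle_{L^2(\mu_m)}$ using self-adjointness, whereas the paper telescopes into two terms $(\bar f_m-f_m)(-\Delta_m)^{-2s}f_m$ and $\bar f_m(-\Delta_m)^{-2s}(f_m-\bar f_m)$; these are equivalent. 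Second, and more substantively, to show $\|\bar f_m-f_m\|_{L^2(V_m,\mu_m)}\to 0$ you use only the uniform continuity of $f$ on the compact set $K$ together with the fact that the Voronoi cells $C_p^m$ have diameter $O(2^{-m})$, while the paper invokes Lemma \ref{holder regu} (which it flags as ``a crucial ingredient'') to get the quantitative bound $\|\bar f_m-f_m\|_{L^2(\mu_m)}\le C2^{-(m+1)(d_w-d_h)}\|\Delta f\|_{L^1(K,\mu)}$. Your route is more elementary and entirely sufficient for the qualitative statement being proved; what the paper's estimate buys is an explicit rate of convergence in $m$, which is not needed here. One small thing worth making explicit in your write-up: the uniform boundedness of $\|\bar f_m+f_m\|_{L^2(\mu_m)}$ follows from $\|\bar f_m\|_\infty\le\|f\|_\infty$ and $\mu_m(V_m)=(3^m+1)/3^m\le 2$, which you assert but do not compute.
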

\begin{proof}
Recall that $\bar X_s^{\,m}(f)=X_s^m(\bar f_m)$. Since  fractional Gaussian fields are centered, it suffices to show that as $m\to \infty$,
\[
\mathbb E\left(\left(\bar X_s^{\,m}(f)\right)^2\right) \longrightarrow \int_K f(-\Delta)^{-2s}fd\mu.
\] 
We will use similar proof as \cite[Proposition 4.5]{CiprianiGinkel} for which Lemma \ref{holder regu}  is a crucial ingredient.

First observe that by \eqref{eq:DCov}, one has
\[
\mathbb E\left(\left(\bar X_s^{\,m}(f)\right)^2\right)=\mathbb E\left(\left(X_s^m(\bar f_m)\right)^2\right)
=\frac1{a_m}\sum_{p\in V_m}\bar f_m(p)(-\Delta_m)^{-2s}\bar f_m(p).
\]
Hence it remains to prove that 
\[
\frac1{a_m}\sum_{p\in V_m}\bar f_m(p)(-\Delta_m)^{-2s}\bar f_m(p) \longrightarrow \int_K f(-\Delta)^{-2s}fd\mu.
\]
Recall the convergence \eqref{eq:convergence} with the notation in Remark \ref{rem:discrete integral}, i.e., for $f_m=f|_{V_m}$, 
\[
\int_{V_m} f_m (-\Delta_m)^{-2s} f_m d\mu_m \longrightarrow \int_K f(-\Delta)^{-2s}fd\mu.
\] 
We thus need to show that 
\[
\int_{V_m} \bar f_m(-\Delta_m)^{-2s}\bar f_m d\mu_m- \int_{V_m} f_m(-\Delta_m)^{-2s} f_md\mu_m \longrightarrow 0.
\]
Indeed, the triangular inequality and Cauchy-Schwarz inequality yield
\begin{align*}
&\left|\int_{V_m} \bar f_m(-\Delta_m)^{-2s}\bar f_md\mu_m- \int_{V_m}f_m(-\Delta_m)^{-2s} f_md\mu_m\right|
\\ \le &
\int_{V_m} \left|\left(\bar f_m-f_m\right)(-\Delta_m)^{-2s} f_m\right|d\mu_m
+\int_{V_m} \left|\bar f_m (-\Delta_m)^{-2s} (f_m-\bar f_m)\right|d\mu_m
\\ \le &
\|\bar f_m-f_m\|_{L^2(V_m,\mu_m)} \left\|(-\Delta_m)^{-2s} f_m\right\|_{L^2(V_m,\mu_m)}
+\|\bar f_m\|_{L^2(V_m,\mu_m)} \left\|(-\Delta_m)^{-2s} (f_m-\bar f_m)\right\|_{L^2(V_m,\mu_m)}.
\end{align*}
One has then $\left\|(-\Delta_m)^{-2s} f_m\right\|_{L^2(V_m,\mu_m)}\le (\lambda_1^m)^{-2s}\|f_m\|_{L^2(V_m,\mu_m)}$ and 
\[
\left\|(-\Delta_m)^{-2s} (f_m-\bar f_m)\right\|_{L^2(\mu_m)} \le (\lambda_1^m)^{-2s} \|f_m-\bar f_m\|_{L^2(\mu_m)}.
\]
Note that from Lemma \ref{lem:measure}, $\|f_m\|_{L^2(V_m,\mu_m)}^2 \to \|f\|_{L^2(K,\mu)}^2$. Recall also $\inf_m \lambda_1^m>0$. It remains to show that  $\|f_m-\bar f_m\|_{L^2(V_m,\mu_m)} \to 0$. By Lemma \ref{holder regu},
\begin{align*}
\|f_m-\bar f_m\|_{L^2(V_m,\mu_m)}^2 
&=\frac{1}{a_m}\sum_{p\in V_m} |f_m(p)-\bar f_m(p)|^2
\\ & \le \frac{1}{a_m}\sum_{p\in V_m} \left(\fint_{C_p^m}|f_m(p)- f(x)| d\mu(x)\right)^2
\\ &\le C 2^{-2(m+1)(d_w-d_h)}\|\Delta f\|_{L^1(K,\mu)}^2.
\end{align*}
Now combining the above estimates and letting $m\to \infty$, we conclude the desired result.
\end{proof}

\begin{proof}[Proof of Theorem \ref{thm: CovergenceLaw}]
Since $(\bar X_s^{\,m})_{m\ge 1}$ is tight in $H^{-\alpha}(K)$ for any $\alpha>2d_h/d_w$, it is enough to show that every convergent subsequence $(\bar X_s^{\,m_k})_{k\ge 1}$ converges in law to $X_s$ in $H^{-\alpha}(K)$, that is, $\bar X_s^{\,m_k}(f)\to X_s(f)$ as $k\to \infty$ for all $f\in H^{\alpha}(K)$. 

Indeed, let $f\in H^{\alpha}(K)$, then there exists a sequence $(f_i)_{i\ge 1}\in \mathcal S$ such that $f_i\to f$ in $H^{\alpha}(K)$ and thus $(\bar{f_i})_{m_k} \to \bar f_{m_k}$ as $i\to \infty$. Therefore  $\bar X_s^{\,m_k}(f_i)$ and $X_s(f_i)$ converge to $\bar X_s^{\,m_k}(f)$ and $X_s(f)$ respectively as $i$ goes to infinity. Recall also $\bar X_s^{\,m_k}(f_i)\to \bar X_s(f_i)$ as $k\to \infty$. The triangle inequality thus concludes our proof.

\end{proof}
 
\bibliographystyle{abbrv}
\bibliography{bibfile}
\

\noindent
Fabrice Baudoin: \url{fabrice.baudoin@uconn.edu}\\
Department of Mathematics,
University of Connecticut,
Storrs, CT 06269

\

\noindent Li Chen: \url{lichen@lsu.edu}\\
Department of Mathematics, Louisiana State University, Baton Rouge, LA 70803

\end{document}